\author{Benjamin Antieau\footnote{\emph{E-mail:} \texttt{benjamin.antieau@gmail.com}.}}
\newcommand{\myauthor}{Benjamin Antieau}
\newcommand{\mytitle}{A local-global principle for the telescope conjecture}
\newcommand{\pdftitle}{\mytitle}
\title{\mytitle}
\definecolor{todo}{rgb}{1,0,0}
\definecolor{conditional}{rgb}{0,1,0}
\definecolor{e-mail}{rgb}{0,.40,.80}
\definecolor{reference}{rgb}{.20,.60,.22}
\definecolor{mrnumber}{rgb}{.80,.40,0}
\definecolor{citation}{rgb}{0,.40,.80}
\DeclareMathOperator{\Ho}{Ho}
\newcommand{\perf}{\mathrm{perf}}
\DeclareMathOperator{\id}{id}
\DeclareMathOperator*{\colim}{colim}
\newcommand{\desc}{\mathrm{desc}}
\newcommand{\rwe}{\tilde{\rightarrow}}
\newcommand{\we}{\simeq}
\newcommand{\iso}{\cong}
\newcommand{\stovicek}{\v{S}\v{t}ov{\'\i}\v{c}ek}
\newcommand{\st}{\mathrm{st}}
\newcommand{\qc}{\mathrm{qc}}
\newcommand{\ccc}{\mathrm{c}}
\newcommand{\Prl}{\mathrm{Pr}^{\mathrm{L}}}
\newcommand{\Mod}{\mathrm{Mod}}
\newcommand{\Perf}{\mathrm{Perf}}
\newcommand{\Cat}{\mathrm{Cat}}
\DeclareMathOperator{\Spec}{Spec}
\DeclareMathOperator{\Hoh}{H}
\newcommand{\Map}{\mathrm{Map}} 
\newcommand{\StCat}{\mathscr{C}\mathrm{at}}
\newcommand{\StMod}{\mathscr{M}\mathrm{od}}
\newcommand{\et}{\mathrm{\acute{e}t}}
\DeclareMathOperator{\Br}{Br}
\newcommand{\Brm}{\mathrm{B}}
\newcommand{\Drm}{\mathrm{D}}
\newcommand{\Kscr}{\mathscr{K}}
\newcommand{\Oscr}{\mathscr{O}}
\newcommand{\Tscr}{\mathscr{T}}
\newcommand{\Mscr}{\mathscr{M}}
\newcommand{\Ascr}{\mathscr{A}}
\newcommand{\Cscr}{\mathscr{C}} 
\newcommand{\Dscr}{\mathscr{D}}
\newcommand{\Xscr}{\mathscr{X}}
\newcommand{\Gscr}{\mathscr{G}}
\newcommand{\Uscr}{\mathscr{U}}
\newcommand{\CC}{\mathds{C}}
\newcommand{\ZZ}{\mathds{Z}}
\newcommand{\PP}{\mathds{P}}
\renewcommand{\SS}{\mathds{S}}
\theoremstyle{plain}
\newtheorem{theorem}{Theorem}[section]
\newtheorem{lemma}[theorem]{Lemma}
\newtheorem{proposition}[theorem]{Proposition}
\newtheorem{conjecture}[theorem]{Conjecture}
\newtheorem{corollary}[theorem]{Corollary}
\theoremstyle{definition}
\newtheorem{definition}[theorem]{Definition}
\newtheorem{example}[theorem]{Example}
\newtheorem{question}[theorem]{Question}
\newtheorem{remark}[theorem]{Remark}
\let\oldmarginpar\marginpar
\renewcommand\marginpar[1]{\-\oldmarginpar[\raggedleft\footnotesize #1]%
{\raggedright\footnotesize #1}}
\begin{document}
\maketitle

\begin{abstract}

    \noindent
    We prove an \'etale local-global principle for the telescope conjecture and use it to
    show that the telescope conjecture holds for derived categories of Azumaya algebras on
    noetherian schemes as well as for many classifying stacks and gerbes. This specializes
    to give another proof of the fact that the telescope conjecture holds for noetherian
    schemes.

    \paragraph{Key Words}
    Telescope conjecture, derived categories, Azumaya algebras.

    \paragraph{Mathematics Subject Classification 2010}
    Primary:
    \href{http://www.ams.org/mathscinet/msc/msc2010.html?t=16Exx&btn=Current}{16E35},
    \href{http://www.ams.org/mathscinet/msc/msc2010.html?t=18Exx&btn=Current}{18E30}.
    Secondary:
    \href{http://www.ams.org/mathscinet/msc/msc2010.html?t=14Fxx&btn=Current}{14F22},
    \href{http://www.ams.org/mathscinet/msc/msc2010.html?t=18Gxx&btn=Current}{18G55}.

\end{abstract}

\section{Introduction}

Let $\Tscr$ be a compactly generated triangulated category with all coproducts. Recall that a (Bousfield)
localization of $\Tscr$ consists of a second triangulated category $\Tscr'$ and a pair of
adjoint functors
\begin{equation*}
    j:\Tscr\rightleftarrows\Tscr':j_\rho
\end{equation*}
such that $j_\rho$ is fully faithful, $j$ being the left adjoint and $j_\rho$ the right. The
associated localization functor is the composition $j_\rho\circ j$. A
localization is called smashing if $j_\rho$ preserves
coproducts, which is equivalent to saying that $j_\rho\circ j$ preserves coproducts.

\begin{conjecture}[Triangulated telescope conjecture]
    If $j:\Tscr\rightleftarrows\Tscr':j_\rho$ is a smashing localization, then $\ker(j)$ is generated
    by objects that are compact in $\Tscr$, where $\ker(j)$ is the full subcategory of
    $\Tscr$ consisting of objects $x$ such that $j(x)\we 0$.
\end{conjecture}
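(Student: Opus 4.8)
This is a conjecture rather than a provable statement, and in fact it is known to fail for general compactly generated triangulated categories (the first counterexamples are due to Keller, with further counterexamples in stable homotopy theory). The goal therefore cannot be to establish it in full generality, but to verify it for the specific categories named in the abstract — derived categories of Azumaya algebras, and of suitable classifying stacks and gerbes. The plan is to isolate the formal content of the conjecture and then to show that, for these categories, it may be checked \'etale-locally, where the situation reduces to a case that is already understood.

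First I would record the reformulation. For compactly generated $\Tscr$, a smashing localization $j:\Tscr\rightleftarrows\Tscr':j_\rho$ is determined by its kernel $\ker(j)$, and the conjecture asserts precisely that each such kernel is generated by objects compact in $\Tscr$. In the tensor-triangulated setting relevant to $\QC(\Xscr)$, smashing localizations correspond to smashing tensor-ideals and may be encoded by idempotent algebra objects; this is the reformulation that makes a descent argument possible. I would then phrase the target as: every idempotent algebra object arises from a compactly generated ideal.

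Next I would set up the \'etale local-global principle itself. Taking $\Tscr=\QC(\Xscr)$ for a scheme, stack, or gerbe $\Xscr$, I would choose an \'etale cover and prove that the assignment sending a term of the cover to its lattice of smashing localizations (equivalently, its idempotent algebras) satisfies \'etale descent. Granting this, a global smashing localization is determined by compatible local data, and if the telescope conjecture holds on each term of the cover, the local compact generators of the local kernels should assemble into a generating set for the global kernel $\ker(j)$. The local case is then reduced to known results: \'etale-locally an Azumaya algebra is a matrix algebra, hence Morita equivalent to the structure sheaf, so $\QC$ of the local piece agrees with the derived category of a commutative noetherian ring, where the telescope conjecture is Neeman's theorem.

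The main obstacle is the descent step. It is not enough to glue smashing localizations; one must show that the \emph{property of being compactly generated} descends — that compact generators of the local kernels can be chosen compatibly and glued to genuine compact objects of $\Tscr$. Controlling compactness under \'etale gluing, particularly in the twisted setting of a gerbe with nontrivial Brauer class, is where the real work lies, and I expect the bulk of the argument to go into establishing the descent of the relevant lattices and the compatibility of local generators.
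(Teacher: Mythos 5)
The statement you were given is a conjecture, and the paper never proves it --- indeed it records that it is false in general (Keller's counterexample, and the valuation-ring examples of Krause--\stovicek) --- so your opening diagnosis is exactly right: the paper's actual content is an \'etale local-global principle that verifies the conjecture for particular categories. Your broad strategy (check telescopy \'etale-locally, where an Azumaya algebra becomes a matrix algebra, Morita-trivial, so Neeman's theorem for commutative noetherian rings applies) is the paper's strategy, and you correctly locate the crux at the descent of compact generation.

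However, the mechanism you propose diverges from the paper's in ways that matter. You suggest encoding smashing localizations by idempotent algebra objects in a tensor-triangulated category and descending those; that is the Balmer--Favi route, which the paper explicitly departs from, for two reasons you should note. First, the twisted categories $\Drm_{\qc}(X,\alpha)$ are not tensor-triangulated --- they are only modules over $\Drm_{\qc}(X)$ --- so there is no unit object and no idempotent algebra inside them encoding a smashing ideal; the paper instead works throughout with the module (``linear'') structure. Second, the paper does not descend lattices of localizations in triangulated categories at all: it lifts everything to stable presentable $\infty$-categories and stacks of linear categories, where a global $\Mod_X$-linear smashing localization $j:\Mod_X^\alpha\rightarrow\Dscr$ induces local ones by tensoring, $\Mod_S^\beta=\Mod_S\otimes_{\Mod_X}\Dscr$, and conversely a smashing localization of stacks has a stack of kernels. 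Making $\Dscr$ and the kernels into stacks requires knowing in advance that they are compactly generated (Lurie's descent theorem applies to compactly generated categories), which is why the paper proves a separate local-global principle for compact generation (Theorem 2.1) and an equivalence between the stacky and linear telescope hypotheses (Theorem 3.3). Moreover, the global generating set is not obtained by choosing local compact generators compatibly and gluing them, as you propose; instead one shows that the sections of the kernel stack form a compactly generated category and then that its compact objects are perfect, hence compact in $\Mod_X^\alpha$ (Corollary 2.4). Finally, the paper's own remark in Section 4 explains why triangulated categories alone cannot run this argument: smashing (Bousfield) localizations are precisely the kind of structure that can be lifted to an enhancement, and the enhancement is not a convenience but the load-bearing ingredient your sketch leaves implicit.
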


As a simple but crucial example, let $Z$ be a closed subscheme of $X$ (which we assume to be
quasi-compact and quasi-separated) defined by $n$
equations $f_1,\ldots,f_n$. Write $U$ for the complement of $Z$ in $X$. Then,
the restriction functor $\Drm_{\qc}(X)\rightarrow\Drm_{\qc}(U)$ is a smashing
localization. The conjecture can be verified directly in this case as follows. Let $K_i$ be
the perfect complex $\Oscr_X\xrightarrow{f_i}\Oscr_X$, and let $K=K_1\otimes\cdots\otimes
K_n$ (the Koszul complex). Then, $K$ is a compact generator of the kernel of the
localization functor. This was first observed by B\"okstedt and
Neeman~\cite{bokstedt-neeman} in the affine case. See~\cite{ag}*{Proposition 6.9} for the
general case.

The telescope conjecture is not really a conjecture, as it is known to be
false in certain circumstances, even for the derived categories $\Drm(R)$ of commutative
rings $R$. The first example was given by Keller~\cite{keller-smashing}, and more recent
examples, of certain
dimension $2$ valuation rings, were given by Krause and
\stovicek~\cite{krause-stovicek}*{Example 7.8}. Nevertheless, there is a great deal of
interest in cases when it does hold, because it relates the classification of smashing
localizations of $\Tscr$ to the classification of thick subcategories of
$\Tscr^\ccc$, the full subcategory of compact objects, and the latter classification problem
is sometimes tractable.

To summarize what is known at present,
Hopkins and Neeman~\cite{neeman-chromatic} gave the first results, establishing the conjecture for $\Drm(R)$, the derived
category of a noetherian commutative ring. As a consequence, one finds that there is a
bijection between the smashing localizations of $\Drm(R)$ (up to equivalence), thick
subcategories of the triangulated category $\Drm_{\perf}(R)$ of perfect complexes on $R$, and
specialization-closed subsets of $\Spec(R)$.
In the non-noetherian case, Dwyer and
Palmieri~\cite{dwyer-palmieri} showed that the conjecture holds for the derived categories of truncated
polynomial algebras in countably many generators, while Stevenson~\cite{stevenson-flat}
established the conjecture for absolutely flat rings.

In the noncommutative case, 
Br\"uning~\cite{bruning} proved the conjecture for the derived categories of finite
dimensional hereditary algebras of finite representation type over a field, a result which was then
extended to all finite dimensional hereditary algebras over a field by
Krause-\stovicek~\cite{krause-stovicek}. In particular, the telescope conjecture holds for
$\Drm_{\qc}(\PP^1_k)$, making $\PP^1_k$ the only non-affine variety for which this form of the
telescope conjecture is known to hold.

In another direction, Stevenson proved the conjecture for the singularity categories of
noetherian rings with hypersurface singularities
in~\cite{stevenson-singularity} and for quotients of regular local rings by regular
sequences.

The triangulated telescope conjecture was originally formulated for the stable homotopy
category $\mathrm{SH}$ by Bousfield~\cite{bousfield-localization}*{Conjecture 3.4}. In the
form written here it was given by Ravenel~\cite{ravenel-localization}*{Conjecture 1.33}. The
reason for its importance in stable homotopy theory is that if true for the $p$-local stable
homotopy category $\mathrm{SH}_{(p)}$, it would give a concrete way of computing
the $K(n)$-localization of a space via a telescope construction, which is a certain homotopy
colimit. Specifically,
the thick subcategories of the triangulated category of $p$-local finite spectra
$\mathrm{SH}^{\mathrm{fin}}_{(p)}$ are known: they are precisely the thick subcategories
given by the kernels of $E(n)$-localization for some $n$. Writing $L_n^t$ for telescopic
localization, which for a $p$-local finite spectrum of type at least $n$ can be described as
$\nu_n$-localization, the kernel of any localization $L$ satisfies
\begin{equation*}
    \ker(L_n^t)\subseteq\ker(L)\subseteq\ker(L_{E(n)})
\end{equation*}
for some uniquely determined non-negative integer $n$.
The telescope conjecture would say that these are equalities.
The current state of the telescope conjecture for the stable homotopy category seems
unclear. Apparently, it is widely believed to be false, and potential counterexamples have
even been produced at various points, but a proof that it is false remains elusive.

There is another version of the telescope conjecture suitable for when a
$\otimes$-triangulated category $\Tscr$ acts on a triangulated category $\Uscr$.
Again, we require $\Tscr$ and $\Uscr$ to have all coproducts and to be compactly generated.
We also require the tensor product map
\begin{equation*}
    \otimes:\Tscr\times\Uscr\rightarrow\Uscr
\end{equation*}
to preserve coproducts in each variable. A localizing subcategory of $\Uscr$ will be called
$\Tscr$-closed if it is closed under tensor products with $\Tscr$.

\begin{conjecture}[Tensor telescope conjecture]
    If $j:\Uscr\rightleftarrows\Uscr':j_\rho$ is a smashing localization where $\ker(j)$ is
    $\Tscr$-closed, then $\ker(j)$ is generated by objects that are compact in $\Uscr$.
\end{conjecture}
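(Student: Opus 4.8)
The plan is to compare $\ker(j)$ with the localizing subcategory generated by the compact objects it already contains, and to use the $\Tscr$-action to close the gap between them. Throughout I work under the structural assumption, satisfied in all the intended applications, that the compact objects of $\Tscr$ are dualizable, so that the action preserves compactness. Write $L=j_\rho\circ j$ for the localization functor and $\Gamma=\fib(\id_\Uscr\to L)$ for the associated acyclization; since $j$ is smashing, both $L$ and $\Gamma$ preserve coproducts, and $\Ascr:=\ker(j)=\ker(L)$ is exactly the essential image $\im(\Gamma)$. Put $\Ascr^\ccc=\Ascr\cap\Uscr^\ccc$ and $\Dscr=\mathrm{Loc}(\Ascr^\ccc)$, the localizing subcategory of $\Uscr$ generated by the compact acyclics. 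Then $\Dscr\subseteq\Ascr$; moreover $\Dscr$ is generated by objects compact in $\Uscr$, so $\Uscr\to\Uscr/\Dscr$ is a finite localization and in particular smashing, and dualizability of $\Tscr^\ccc$ makes $\Ascr^\ccc$ stable under the action, whence $\Dscr$ is again $\Tscr$-closed. The entire content of the statement is the reverse inclusion $\Ascr\subseteq\Dscr$.

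I would prove this inclusion by passing to the quotient: $\Ascr\subseteq\Dscr$ is equivalent to the vanishing of the image $\bar\Ascr$ of $\Ascr$ in $\Uscr/\Dscr$. Because $\Dscr$ is $\Tscr$-closed, the action descends to $\Uscr/\Dscr$, which remains compactly generated, and $\bar\Ascr$ is a smashing, $\Tscr$-closed localizing subcategory that by construction contains no nonzero object compact in $\Uscr/\Dscr$ (a compact lift would be a compact object of $\Ascr$, hence already in $\Dscr$). The problem is thus reduced to the clean assertion that a smashing $\Tscr$-closed localizing subcategory containing no nonzero compact is zero.

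To prove this I would bring in the support theory attached to the $\Tscr$-action over the Balmer spectrum $\mathrm{Spc}(\Tscr^\ccc)$, assigning to each object a support and to each $\Tscr$-closed localizing subcategory the union of the supports of its members. The target is a dictionary in which $\Tscr$-closed localizing subcategories are determined by their supports (a stratification statement) and those generated by compacts correspond to the Thomason subsets. The crux — and the step I expect to be the main obstacle, since it is precisely what the ordinary telescope conjecture can get wrong — is to show that the support of a smashing $\Tscr$-closed subcategory is Thomason. The mechanism I would use is that smashing forces $\Gamma$ to preserve coproducts, so that, testing against a compact generator $u\in\Uscr^\ccc$ and writing the relevant acyclization as a filtered colimit $\colim_\alpha C_\alpha$ of compact approximations, dualizability of $\Tscr^\ccc$ lets one move the colimit across the action and recognize each contribution as lying in $\Ascr^\ccc$; this is what should pin the support of $\bar\Ascr$ to the union of the supports of the compacts it contains, forcing it to be empty.

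The remaining input is the stratification itself, that is, injectivity of the support assignment, and here the $\Tscr$-action makes the question local over $\mathrm{Spc}(\Tscr^\ccc)$: it suffices to verify minimality of each stratum after passing to the local category at a prime of $\Tscr^\ccc$. This is the natural entry point for a local-global reduction, proving the classification one prime at a time and reassembling, and it is where the geometric structure of $\Uscr$ enters in the applications. Granting the stratification together with the smashing-implies-Thomason step, the two reductions above give $\bar\Ascr=0$, hence $\Ascr=\Dscr$, which is exactly the assertion that $\ker(j)$ is generated by objects compact in $\Uscr$.
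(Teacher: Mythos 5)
There is a fundamental problem: the statement you are trying to prove is stated in the paper as a \emph{conjecture}, not a theorem, and the paper explicitly records that it is false in general. The introduction cites Keller's counterexample and the dimension-$2$ valuation rings of Krause--\stovicek, which are derived categories $\Drm(R)$ of commutative rings. For $\Tscr=\Uscr=\Drm(R)$ acting on itself, the unit is a compact generator, so \emph{every} localizing subcategory is automatically $\Tscr$-closed (Lemma~\ref{lem:unital} and the surrounding discussion make exactly this point); hence the tensor telescope conjecture specializes to the triangulated telescope conjecture in these cases, and the counterexamples refute the statement as written. No proof can be correct, and the paper accordingly never offers one: its actual results are conditional --- an \'etale local-global principle (Theorem~\ref{thm:localglobal}) reducing the $\Mod_X$-linear telescope hypothesis to \'etale-local verification, fed by the Hopkins--Neeman base case for noetherian rings --- which is a different kind of claim from the unconditional conjecture.

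To your credit, you located the failure point yourself. Your reduction to the quotient $\Uscr/\Dscr$ is formally fine, and the resulting clean assertion (a smashing $\Tscr$-closed localizing subcategory with no nonzero compacts is zero) is indeed equivalent to the conjecture; but the two inputs you then invoke --- that the support of a smashing $\Tscr$-closed subcategory is Thomason, and the stratification (injectivity of the support assignment on $\Tscr$-closed localizing subcategories) --- are precisely what fail in the counterexamples, where $\bar{\Ascr}$ is a genuinely nonzero smashing subcategory containing no nonzero compact. Your colimit mechanism does not rescue this: writing the acyclization $\Gamma u$ of a compact generator as a filtered colimit $\colim_\alpha C_\alpha$ gives compact approximations $C_\alpha$ in $\Uscr$, and there is no way to ``recognize each contribution as lying in $\Ascr^\ccc$'' --- arranging the $C_\alpha$ (or cofinally many of them) to be acyclic \emph{is} the telescope conjecture, so this step is circular. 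The support-theoretic frame you sketch is essentially the Balmer--Favi/Stevenson strategy, which does succeed under noetherian or stratification hypotheses; if you add such hypotheses explicitly (as the paper's theorems do), your outline becomes a viable route to the known cases, but not to the conjecture itself.
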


We will say that the $\otimes$-telescope conjecture holds for $\Uscr$ under the action of
$\Tscr$ when the conjecture is verified. When $\Tscr=\Uscr$, we will simply say that the $\otimes$-telescope conjecture
holds for $\Tscr$. When the unit of $\Tscr$ is a compact generator,
as is the case for the stable homotopy category $\mathrm{SH}$ or the derived
category $\Drm(R)$ of a commutative ring $R$, the $\otimes$-telescope conjecture for $\Uscr$
under the action of $\Tscr$ is equivalent to the triangulated telescope conjecture, since in
that case \emph{every} localizing subcategory of $\Uscr$ is closed under tensoring with
objects of $\Tscr$.

The tensor telescope conjecture was stated in this form by
Stevenson~\cite{stevenson-support}, generalizing the situation where $\Uscr=\Tscr$ considered
previously. An example of why it is
useful to consider the more general situation is that if $\alpha\in\Br(X)$, then
$\Drm_{\qc}(X,\alpha)$ is not a
$\otimes$-category. But, nevertheless, as one result of our paper, if $X$ is noetherian,
then the $\otimes$-telescope conjecture holds for $\Drm_{\qc}(X,\alpha)$ under the action of
$\Drm_{\qc}(X)$.

In~\cite{atjlss}, the authors show that the $\otimes$-telescope conjecture holds for the derived categories
of noetherian formal schemes, extending the Hopkins-Neeman result in particular to the
derived categories of quasi-coherent sheaves on noetherian schemes.
Balmer and Favi~\cite{balmer-favi}
established a local-global principle under which the $\otimes$-telescope holds globally if it holds
Zariski locally on Balmer's spectrum for tensor triangulated categories~\cite{balmer}. Their
work gives another proof of the $\otimes$-telescope conjecture for the derived categories of
quasi-coherent sheaves $\Drm_{\qc}(X)$ on noetherian schemes.
Hovey, Palmieri, and Strickland~\cite{hovey-palmieri-strickland} gave a new proof of Neeman's result, using the equivalence
between the triangulated tensor conjecture and the $\otimes$-telescope conjecture for $\Drm(R)$
when $R$ is noetherian. Their methods also prove the $\otimes$-telescope conjecture for
comodules over a finite-dimensional Hopf algebra.
Benson, Iyengar, and Krause established the $\otimes$-telescope conjecture for the
homotopy category of injective complexes and for the stable category of a
finite group in~\cite{benson-iyengar-krause}.

For a quasi-compact and quasi-separated scheme,
Thomason~\cite{thomason-triangulated} classified the thick $\otimes$-subcategories of $\Drm_{\perf}(X)$, the triangulated
category of perfect complexes on $X$. By the results of~\cite{atjlss}
and~\cite{balmer-favi}, there is a nice description of all smashing $\otimes$-localizations of
$\Drm(X)$ for $X$ noetherian. In particular, to any such localization there is a uniquely
defined specialization-closed subset of $X$.
This subset is precisely the locus where the objects of $\ker(j)$ are supported.

Stevenson~\cite{stevenson-support} considered the theory of supports that arises when $\Tscr$ acts on $\Uscr$ and
used this to give yet another
proof of the tensor telescope conjecture for the derived categories of noetherian schemes.
Stevenson's proof is conceptually satisfying as it proceeds by actually classifying the
tensor closed subcategories, yielding a proof closer in spirit to Neeman's proof in the
affine case. Dell'Ambrogio and Stevenson~\cite{dellambrogio-stevenson} proved the $\otimes$-telescope
conjecture for quasi-projective varieties and for weighted projective spaces by considering
a graded version of the telescope hypothesis and then using support theory.

\vspace{10pt}
The work of Balmer and Favi shows in a great deal of generality that the $\otimes$-telescope conjecture holds for $\Tscr$ acting on itself when it holds
locally on the Balmer spectrum of $\Tscr^\ccc$. 
The purpose of this paper is to establish a new local-global principle for the
telescope conjecture. Our principle differs in three important ways from theirs.
First, it holds for \'etale covers not just Zariski covers. Second,
it works for the action of $\Tscr$ on $\Uscr$, allowing one to establish
telescopy in noncommutative situations such as for Azumaya algebras. This perspective is
present in Stevenson~\cite{stevenson-support} as well. Third, it requires as
input not triangulated categories but enhancements such as
stable $\infty$-categories. This restriction is not a barrier for any
foreseeable application.

Our methods use in a crucial way the notion of a stack of stable presentable
$\infty$-categories over a scheme $X$, as studied in~\cite{dag11} and~\cite{ag}. The dg
category approach to these ideas can be found in~\cite{toen-derived}. We very briefly describe
this theory here, referring the reader more generally to~\cite{ag}*{Section 6} and the
references there. These stacks provide one method of giving sense to the nonsense notion of
a stack of triangulated categories.

We fix a base connective commutative ring spectrum $R$. This might be the Eilenberg-MacLane
spectrum of an ordinary commutative ring or of a simplicial commutative ring. For our
applications, we use only ordinary commutative rings, but it seems relevant to note that the
theorems hold for quasi-compact and quasi-separated derived schemes, which are schemes with
sheaves of local connective commutative ring spectra.

A stable $\infty$-category $\Cscr$
is an $\infty$-category that has a $0$ object, that has fiber and cofiber sequences, and in
which fiber and cofiber sequences agree. The homotopy category $\Ho(\Cscr)$ of $\Cscr$
is naturally a triangulated category. By~\cite{ha}*{Corollary 1.4.4.2}, a stable $\infty$-category $\Cscr$ is presentable if
$\Ho(\Cscr)$ has all coproducts, has hom \emph{sets}, and has a $\kappa$-compact generator for some regular
cardinal $\kappa$.

When $A$ is an $A_\infty$-algebra spectrum (such as the Eilenberg-MacLane spectrum of an
ordinary associative algebra), $\Mod_A$ is a stable presentable $\infty$-category, with
homotopy category $\Drm(A)$. For example if $\SS$ is the sphere spectrum, then $\Mod_\SS$ is
an $\infty$-categorical enhancement of the triangulated stable homotopy category. For a
quasi-compact and quasi-separated scheme $X$, there is a stable presentable
$\infty$-category that we will denote by $\Mod_X$ with $\Ho(\Mod_X)=\Drm_{\qc}(X)$, the
triangulated category of complexes of $\Oscr_X$-modules with quasi-coherent cohomology
sheaves.

If $S$ is a connective commutative $R$-algebra, then an $S$-linear category is a stable
presentable $\infty$-category enriched over $\Mod_S$, the stable presentable
$\infty$-category of $S$-module spectra. These objects can be realized as the (left) modules
for the commutative ring object $\Mod_S$ in the $\infty$-category $\Prl_{\st}$. We denote
this category by $\Cat_S=\Mod_{\Mod_S}(\Prl_{\st})$.

An $S$-linear category with \'etale hyperdescent is an $S$-linear category $\Mod_S^\alpha$ such that for any
connective commutative $S$-algebra $T$ and any \'etale hypercover $\Spec
T^\bullet\rightarrow\Spec T$, the induced map
\begin{equation*}
    \Mod_T\otimes_{\Mod_S}\Mod_S^\alpha\rightarrow\lim_\Delta\Mod_{T^\bullet}\otimes_{\Mod_S}\Mod_S^\alpha
\end{equation*}
is an equivalence. These define a full subcategory $\Cat_S^\desc$ of $\Cat_S$. It is
an important fact that these glue together to form a stack $\StCat^\desc$
(see~\cite{dag11}*{Theorem 7.5}).

Let $X$ be an $R$-scheme (which might be derived).
An \'etale  hyperstack (henceforth just a stack) of linear categories on $X$ is a map of
stacks $\StMod^\alpha:X\rightarrow\StCat^\desc$ over $\Spec R$. Loosely speaking, $\StMod^\alpha$
assigns to any affine $\Spec S\rightarrow X$ a stable presentable $S$-linear category
$\Mod^\alpha_S$ and to any map $f:\Spec T\rightarrow\Spec S$ a pull-back map
$f^*:\Mod^\alpha_S\rightarrow\Mod^\alpha_T$ in such a way that if $\Spec T^\bullet\rightarrow\Spec S$ is an \'etale
hypercover, then the associated map
\begin{equation*}
    \Mod^\alpha_S\rightarrow\lim_\Delta\Mod^\alpha_{T^\bullet}
\end{equation*}
is an equivalence of stable presentable $S$-linear categories. The affines here are $\Spec
S$ for all connective commutative $R$-algebras, a class that includes the Eilenberg-MacLane spectra
of all ordinary $\pi_0R$-algebras. For us, $R$ itself will be such an Eilenberg-MacLane
spectra and no truly derived schemes will arise in the paper.

The $\infty$-category of global sections of
$\StMod^\alpha$ is
\begin{equation*}
    \Mod^\alpha_X=\lim_{\Spec S\rightarrow X}\Mod^\alpha_S,
\end{equation*}
where the limit is computed in $\Cat_R$.

\begin{example}
    The stack that assigns to each $\Spec S\rightarrow X$ the stable presentable
    $\infty$-category $\Mod_S$ of $S$-module spectra will be written $\StMod^{\Oscr}$. This can be thought of as
    the stack of complexes of $\Oscr_X$-modules with quasi-coherent cohomology. The homotopy category of
    $\Mod_X=\Mod_X^\Oscr$ is $\Drm_{\qc}(X)$.
\end{example}

The stable $\infty$-category $\Mod_X$ is symmetric monoidal, and any other category of
global sections $\Mod_X^\alpha$ comes with a natural action of $\Mod_X$. We say that
$\Mod^\alpha_X$ satisfies the $\Mod_X$-linear telescope hypothesis if the kernel of any $\Mod_X$-linear
smashing localization is generated by compact objects of $\Mod^\alpha_X$. When
$\alpha=\Oscr$, this is the $\infty$-categorical analogue of the $\otimes$-triangulated
telescope conjecture.

The local-global principle of the title is encoded in the following result.

\begin{theorem}
    Let $X$ be a quasi-compact and quasi-separated scheme, and suppose that
    $\StMod^\alpha$ is a stack of linear categories on $X$. If
    there is an \'etale cover $U\rightarrow X$ such that $\Mod^\alpha_U$ satisfies the
    $\Mod_U$-linear
    telescope hypothesis, then $\Mod^\alpha_X$ satisfies the $\Mod_X$-linear telescope hypothesis.
\end{theorem}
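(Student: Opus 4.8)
The plan is to verify the $\Mod_X$-linear telescope hypothesis for a single smashing localization $j:\Mod^\alpha_X\rightleftarrows\Mscr':j_\rho$ with $\Mod_X$-closed kernel $\Lscr=\ker(j)$, and to do so by descent along the \v{C}ech nerve $U^\bullet$ of the given cover $f\colon U\to X$. Write $L=j_\rho j$ for the smashing localization functor and $\Gamma\to\id\to L$ for the associated colocalization sequence, so that $\Lscr$ is the essential image of $\Gamma$; since $L$ is smashing, $\Gamma$ is colimit-preserving. The first step is to base change this data along $f$. As $f$ is a quasi-compact quasi-separated \'etale cover, the pullback $f^*\colon\Mod^\alpha_X\to\Mod^\alpha_U$ is a colimit-preserving $\Mod_X$-linear symmetric monoidal functor that is conservative (faithful flatness) and preserves compact objects, while $f_*$ is colimit-preserving and satisfies the projection formula. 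Consequently there is a unique colimit-preserving $\Mod_U$-linear $L_0$ with $f^*L\we L_0f^*$; it is again an idempotent, hence a $\Mod_U$-linear smashing localization, with $\Mod_U$-closed kernel $\Lscr_0$ and colocalization $\Gamma_0$ satisfying $f^*\Gamma\we\Gamma_0f^*$. By the hypothesis that $\Mod^\alpha_U$ satisfies the $\Mod_U$-linear telescope hypothesis, the kernel $\Lscr_0$ is generated by objects that are compact in $\Mod^\alpha_U$.

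Next I would reduce the global statement to a generation statement over $U$. Conservativity of $f^*$ gives $\Lscr=(f^*)^{-1}(\Lscr_0)$, so a global smashing localization is determined by its pullback. Set $\Lscr^\omega=\Lscr\cap(\Mod^\alpha_X)^\omega$ and $\Lscr'=\mathrm{Loc}(\Lscr^\omega)\subseteq\Lscr$; then $\Lscr'$ is compactly generated, hence smashing, and the telescope hypothesis for $L$ is precisely the assertion $\Lscr'\we\Lscr$. Because base change of a compactly generated localizing subcategory is computed on generators, the pullback of $\Lscr'$ is $\mathrm{Loc}(f^*\Lscr^\omega)$, so by conservativity $\Lscr'\we\Lscr$ is equivalent to the equality $\mathrm{Loc}(f^*\Lscr^\omega)\we\Lscr_0$ inside $\Mod^\alpha_U$. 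The inclusion $\subseteq$ is clear since $f^*$ carries $\Lscr^\omega$ into the compact objects of $\Lscr_0$; using that $\Lscr_0$ is compactly generated, the reverse inclusion reduces to showing that every object of $\Lscr_0^\omega=\Lscr_0\cap(\Mod^\alpha_U)^\omega$ lies in $\mathrm{Loc}(f^*\Lscr^\omega)$. The projection formula, applied to $f_*$, yields $Lf_*\we f_*L_0$, so $f_*$ carries $\Lscr_0$ into $\Lscr$ and is conservative; this is the mechanism by which compact generators over $U$ will be transported back to $X$.

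The crux, and the step I expect to be the main obstacle, is this last inclusion: producing enough \emph{global} compact objects of the kernel whose pullbacks generate $\Lscr_0$. The essential input is \'etale descent of perfect complexes, which identifies the compact objects of $\Mod^\alpha_X$ with the limit $\lim_\Delta(\Mod^\alpha_{U^\bullet})^\omega$; since $L$ is a globally defined smashing localization, its kernel is stable under the pullbacks along the structure maps of $U^\bullet$, so the compact objects of $\Lscr$ form a substack and $\Lscr^\omega\we\lim_\Delta\Lscr^\omega_{U^\bullet}$. I would then argue that descent of perfect complexes is \emph{effective} for the compact generators of the kernel: starting from a set of compact generators of $\Lscr_0$, one forms their pullbacks along $U^1\rightrightarrows U^0$, uses the smashing hypothesis (which keeps all of this inside the kernel-substack) together with the projection-formula transport $Lf_*\we f_*L_0$ to glue these local data up the \v{C}ech nerve into objects of $\lim_\Delta\Lscr^\omega_{U^\bullet}\we\Lscr^\omega$, and checks that the pullbacks of the resulting global compacts still generate $\Lscr_0$. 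It is exactly here that the \'etale (rather than merely Zariski) nature of the cover is harmless, since descent of perfect complexes and conservativity of $f^*$ hold for the \'etale topology, and where the smashing hypothesis is indispensable, as it is what makes the kernel a sheaf of subcategories so that the gluing never leaves $\Lscr$. Granting this effectivity, $\mathrm{Loc}(f^*\Lscr^\omega)\we\Lscr_0$, hence $\Lscr'\we\Lscr$, and $\Mod^\alpha_X$ satisfies the $\Mod_X$-linear telescope hypothesis.
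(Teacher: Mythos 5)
Your reduction steps are sound and your skeleton is, in spirit, the same as the paper's: base change the smashing localization along $f$, apply the $\Mod_U$-linear telescope hypothesis to the kernel $\Lscr_0$ over $U$, and reduce to producing enough global compact objects of $\ker(j)$ whose pullbacks generate $\Lscr_0$ (the paper packages the base-change bookkeeping as the equivalence of the linear and stacky telescope hypotheses, Theorem~\ref{thm:stacks}). But the step you yourself flag as the crux contains a genuine gap, and the mechanism you propose for it does not work. A compact generator of $\Lscr_0$ carries no descent datum: its two pullbacks to $U\times_X U$ need not be equivalent, so there is nothing to ``glue up the \v{C}ech nerve.'' Effectivity of descent---the equivalence $\Lscr^\omega\we\lim_\Delta\Lscr^\omega_{U^\bullet}$---says only that \emph{compatible} systems of local objects glue; it provides no supply of such systems, and a priori this limit could be far too small to generate. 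This is exactly the classical difficulty of B\"okstedt--Neeman and Thomason: compact objects do not descend or extend naively, and the known resolutions are not ``descent of generators'' but extension arguments (e.g.\ $y\oplus\Sigma y$ extends across a quasi-compact open, with Koszul-type compact objects supported on the complement).

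What the paper supplies at precisely this point is Theorem~\ref{thm:lgcptg}, the local-global principle for compact generation, applied not to $\Mod^\alpha$ but to the \emph{kernel stack}: since the localization is a localization of stacks, the kernels $\Kscr_T=\ker(j_T)$ form a stack of linear categories (kernels are limits, hence can be computed locally); this stack is \'etale-locally compactly generated by the hypothesis over $U$; therefore its global sections are compactly generated, and its compact objects are exactly the perfect (locally compact) ones (Corollary~\ref{cor:perfect}). That theorem is not proved by descending generators---it combines Lurie's theorem in the affine \'etale case (\cite{dag11}*{Theorem 6.1}) with a Thomason--Neeman style induction over a quasi-compact open cover. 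Compactness is then transported into the ambient category by the observation preceding Theorem~\ref{thm:localglobal}: if $\Kscr_S$ is generated by objects compact in $\Mod^\alpha_S$, then \emph{every} compact object of $\Kscr_S$ is compact in $\Mod^\alpha_S$; hence a compact object of the glued kernel is locally ambient-compact, i.e.\ perfect, hence compact in $\Mod^\alpha_X$. (You also need this principle at the outset to know $\Mod^\alpha_X$ is compactly generated at all, which your use of $(\Mod^\alpha_X)^\omega$ presupposes.) Alternatively, the transport-by-$f_*$ idea you mention in passing could be made into a genuinely different repair: for an \'etale (hence quasi-finite, flat) cover one has $f^!\we f^*$, so $f_*$ preserves compact objects, carries $\Lscr_0$ into $\Lscr$ by the projection formula, and the objects $f_*y_i$ for compact generators $y_i$ of $\Lscr_0$ generate $\Lscr$ by adjunction plus conservativity of $f^*$---but your proposal does not carry this out, and as written the gluing argument it relies on instead would fail.
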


Of course, one might wonder how this statement translates into the original language of
triangulated categories. This is spelled out in detail in Section~\ref{sec:triangulated}:
the triangulated versions are equivalent to the $\infty$-categorical versions.
As a consequence of the theorem, we prove the telescope hypothesis in the following
situations\footnote{These are stated in the body of the paper in their $\infty$-categorical
forms. We translate them here into the world of triangulated categories.}.
\begin{enumerate}
    \item   The $\otimes$-telescope conjecture holds for $\Drm_{\qc}(X,\alpha)$ under the
        action of $\Drm_{\qc}(X)$, where $\Drm_{\qc}(X,\alpha)$ is the $\alpha$-twisted derived category of a noetherian
        scheme, for $\alpha\in\Br(X)$. Proving this result was the original motivation for
        the project. Even for $X$ affine this was unknown. When $X$ is
        affine and $\alpha=0$, this was Neeman's result. For $X$ a general noetherian scheme
        and $\alpha=0$, it has been proven by~\cite{atjlss}, \cite{balmer-favi},
        \cite{hovey-palmieri-strickland}, and \cite{stevenson-support}. Thus, we find a
        fifth proof, most similar in spirit to that of Balmer and Favi.
    \item   The $\otimes$-telescope conjecture holds for $\Drm_{\qc}(\Brm\Gscr)$ under the
        action of $\Drm_{\qc}(X)$, when $\Brm\Gscr$ is the classifying stack of
        a finite tame \'etale group scheme $\Gscr$ over a noetherian
        scheme $X$. Since $\Drm_{\qc}(\Brm\Gscr)$ is itself a $\otimes$-triangulated category and
        there is a $\otimes$-triangulated pullback functor
        $\Drm_{\qc}(X)\rightarrow\Drm_{\qc}(\Brm\Gscr)$, the $\otimes$-telescope conjecture holds for
        $\Drm_{\qc}(\Brm\Gscr)$ acting on itself as well. A similar comment applies in each of the
        next cases.
    \item   The $\otimes$-telescope conjecture holds for $\Drm_{\qc}(\Brm\Ascr)$ under the action of
        $\Drm_{\qc}(X)$, when
        $\Ascr$ is a finite abelian group scheme over a noetherian scheme $X$.
    \item   The $\otimes$-telescope conjecture holds for $\Drm_{\qc}(\Xscr)$ under the action of
        $\Drm_{\qc}(X)$, where
        $\Xscr\rightarrow X$ is a finite abelian gerbe over a noetherian scheme.
\end{enumerate}

Besides these results, we give several examples throughout of new cases of the telescope
conjecture. For example, in Example~\ref{ex:dg}, we give what we believe to be the first
example where telescopy holds for the derived category of a dg algebra that is not derived
Morita equivalent to an ordinary algebra. We also show that when $X$ is quasi-compact and
quasi-separated, the $\otimes$-closed smashing localizing subcategories of
$\Drm_{\qc}(X,\alpha)$ under the action of $\Drm_{\qc}(X)$ correspond bijectively to the
specialization closed subsets of $X$ that can be written as the union of closed subsets with
quasi-compact complement.

We end the paper by establishing the following classification theorem using recent work of
Dubey and Mallick~\cite{dubey-mallick}.

\begin{theorem}
    Let $X$ be a smooth scheme of finite type over a field $k$, and let $\Gscr\rightarrow X$
    be a finite \'etale group scheme of order prime to the characteristic of $k$. Suppose that $\Xscr\rightarrow X$ is a
    $\Gscr$-gerbe (a stack over $X$ \'etale locally equivalent to $\Brm\Gscr$). Then, there is a
    bijection between the set of $\otimes$-closed smashing localizations of
    $\Drm_{\qc}(\Xscr)$ and the specialization closed subsets of $X$.
\end{theorem}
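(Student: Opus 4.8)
The plan is to combine two ingredients: the local-global telescope principle (Theorem 1.12 in the excerpt, the first unnumbered Theorem) together with the classification of smashing $\otimes$-closed localizing subcategories in terms of specialization-closed subsets of $X$, which the paper establishes for twisted derived categories. The strategy is first to reduce the gerbe case to a form where the local-global principle and the support theory both apply, and then to identify the lattice of $\otimes$-closed smashing localizations with the lattice of specialization-closed subsets of $X$.

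Let me sketch the steps. First I would observe that $\Xscr\rightarrow X$ is étale locally equivalent to $\Brm\Gscr$, and since $\Gscr$ is finite étale of order prime to $\mathrm{char}(k)$, it is in particular a finite tame étale group scheme; hence by the already-established case (item 2 in the introduction), $\Drm_{\qc}(\Brm\Gscr)$ satisfies the $\Drm_{\qc}(X)$-linear telescope hypothesis étale locally. By the local-global principle, the $\otimes$-telescope conjecture holds for $\Drm_{\qc}(\Xscr)$ under the action of $\Drm_{\qc}(X)$. This gives the \emph{existence} half: every $\otimes$-closed smashing localization has compactly generated kernel. Second, I would invoke the theory of supports for the $\Drm_{\qc}(X)$-action on $\Drm_{\qc}(\Xscr)$: to each object one assigns its support, a subset of $X$ (or of the Balmer spectrum of $\Drm_{\perf}(X)$, which for the smooth finite-type situation is homeomorphic to the underlying space of $X$). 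The key is that once telescopy holds, the smashing $\otimes$-closed localizing subcategories are in bijection with the $\otimes$-closed localizing subcategories generated by compact objects, and these in turn are classified by their supports.

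The crucial input here is the recent work of Dubey and Mallick, which I expect supplies the classification of (thick, or compactly generated localizing) $\otimes$-closed subcategories of $\Drm_{\qc}(\Xscr)$ in terms of subsets of $\Spec$ of the appropriate ring, for gerbes of the stated type. Concretely, I would argue that the support datum identifies the lattice of $\otimes$-closed localizing subcategories generated by compacts with the lattice of specialization-closed subsets of $X$; since $X$ is noetherian (smooth of finite type over $k$), every specialization-closed subset is a union of closed subsets with quasi-compact complement, so the constraint mentioned in the general qcqs statement is automatic. Combining this with the telescope hypothesis, every smashing $\otimes$-closed localizing subcategory is of this form, so it corresponds to a unique specialization-closed subset, and conversely each such subset arises. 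This yields the desired bijection.

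\textbf{The main obstacle} I anticipate is verifying that the support theory for the $\Drm_{\qc}(X)$-action on the twisted/gerby category $\Drm_{\qc}(\Xscr)$ genuinely has support values in $X$ itself (and not some larger space), together with checking that the classification of Dubey and Mallick matches up with specialization-closed subsets of $X$ rather than of the gerbe. The smoothness and the order-prime-to-characteristic hypotheses should be precisely what guarantees that $\Drm_{\qc}(\Xscr)$ decomposes, étale locally, into pieces indexed by characters of $\Gscr$, each a twisted form of $\Drm_{\qc}(X)$ with the same support theory, so that the support of any object is detected on $X$. Making this identification clean — and ensuring the bijection is one of lattices, i.e.\ that it respects inclusions — is where the care is needed; the rest follows formally from the telescope hypothesis established via the local-global principle.
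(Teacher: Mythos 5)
Your proposal is correct and follows essentially the same route as the paper: both first obtain telescopy for $\Drm_{\qc}(\Xscr)$ from Theorem~\ref{thm:classifying} together with the local-global principle (so every $\otimes$-closed smashing localization is generated by its intersection with $\Drm_{\perf}(\Xscr)$), and then invoke Dubey--Mallick's isomorphism $\mathrm{Spc}\,\Drm_{\perf}(X)\iso\mathrm{Spc}\,\Drm_{\perf}(\Xscr)$ plus Thomason's classification of thick $\otimes$-ideals to match these kernels with specialization closed subsets of $X$. The ``main obstacle'' you flag --- whether supports genuinely land in $X$ rather than some larger space --- is resolved exactly by that spectrum isomorphism, since the coarse moduli space of $\Xscr$ is $X$.
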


In Section~\ref{sec:compact} we prove a local-global principle for
the property of being compactly generated. Section~\ref{sec:telescopy} contains the main definitions,
of the telescope hypothesis, the linear telescope hypothesis, and the stacky telescope
hypothesis. At the end, we prove a key theorem that says that the stacky telescope
hypothesis is equivalent to the linear telescope hypothesis. In
Section~\ref{sec:triangulated} the
$\infty$-categorical telescope hypotheses are compared to the triangulated versions, and are
shown to be equivalent where appropriate. Section~\ref{sec:localglobal} contains the main theorem, the
local-global principle, as well as the consequences for schemes and Azumaya algebras.
Finally, in Section~\ref{sec:gerbes}, we prove the linear telescope conjecture for
classifying stacks of finite \'etale group schemes in the tame case, for finite abelian
group schemes, and for gerbes over these.

\subsection{Acknowledgments}

I thank Paul Balmer and David Gepner for conversations about telescopy and Greg Stevenson
for an illuminating comment about when localizing subcategories can be lifted to a
model.

\section{The local-global principle for compact generation}\label{sec:compact}

We need the following result, which is not quite proved in the union of the
papers of Lurie~\cite{dag11}, To\"en~\cite{toen-derived}, and Antieau-Gepner~\cite{ag}.
The idea is due to B\"okstedt and Neeman~\cite{bokstedt-neeman}.

\begin{theorem}\label{thm:lgcptg}
    Let $\alpha:X\rightarrow\StCat^{\desc}$ classify a stack of linear categories $\StMod^\alpha$
    over $X$, where $X$ is a quasi-compact and quasi-separated scheme. If $\StMod^\alpha$ is
    \'etale locally compactly generated, then $\Mod^\alpha_X$ is compactly generated.
\end{theorem}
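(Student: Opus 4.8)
The plan is to deduce the global statement from two complementary local-to-global mechanisms: a Zariski Mayer--Vietoris induction that reduces matters to an affine base, and an étale descent argument that handles a single affine chart. Throughout I use that $\Mod^\alpha_X=\lim_{\Spec S\rightarrow X}\Mod^\alpha_S$ is computed in $\Cat_R$, so that it is a stable presentable $R$-linear category, and that for any map $f\colon\Spec T\rightarrow\Spec S$ over $X$ the pullback $f^*\colon\Mod^\alpha_S\rightarrow\Mod^\alpha_T$ preserves colimits and hence admits a right adjoint $f_*$ by the adjoint functor theorem.

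First I would reduce to the case that $X=\Spec S$ is affine. Since $\StMod^\alpha$ satisfies étale, and in particular Zariski, hyperdescent, any decomposition $X=V\cup W$ into quasi-compact opens yields a pullback square exhibiting $\Mod^\alpha_X$ as the fiber product $\Mod^\alpha_V\times_{\Mod^\alpha_{V\cap W}}\Mod^\alpha_W$. Inducting on the size of a finite affine cover of the quasi-compact, quasi-separated scheme $X$, the inductive step is the classical Bökstedt--Neeman/Thomason--Neeman gluing: given compact generators on $V$, on $W$, and on $V\cap W$, one produces compact generators on $X$ by combining generators pushed forward from the open $V$ with Koszul-type objects supported on the closed complement $X\setminus V$, using that $\Mod^\alpha_X\rightarrow\Mod^\alpha_V$ is a smashing localization whose kernel is compactly generated. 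Carrying this out in the stable presentable setting is bookkeeping once the pullback-square description is in hand, and it reduces the theorem to producing a compact generator of $\Mod^\alpha_S$ from an étale cover on which the stack is compactly generated.

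For the affine case I would exploit that an étale surjection $\Spec T\rightarrow\Spec S$ is, by the local structure theory of étale morphisms, refinable after a Zariski cover of $\Spec S$ by finite étale surjections; since the previous step already absorbs Zariski localizations, I may assume the cover $f\colon\Spec T\rightarrow\Spec S$ is finite étale and that $\Mod^\alpha_T$ is compactly generated, say by a set $\{d_i\}$. The finite étale cover is descendable, in the sense that $S$ lies in the thick subcategory of $\Mod_S$ generated by $T$; acting on $\Mod^\alpha_S$ through its $\Mod_S$-module structure, this forces the localizing subcategory generated by the image of $f_*$ to be all of $\Mod^\alpha_S$. Combined with the facts that $\{d_i\}$ generates $\Mod^\alpha_T$ and that $f_*$ preserves colimits, the objects $\{f_*d_i\}$ generate $\Mod^\alpha_S$ as a localizing subcategory. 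Finally, because $f$ is finite étale the pushforward $f_*$ agrees with the left adjoint $f_!$ of $f^*$, hence preserves compact objects; thus each $f_*d_i$ is compact and $\Mod^\alpha_S$ is compactly generated.

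The main obstacle is the compactness of the generators in the étale descent step. Pushforward along a general étale cover does not preserve compact objects, so the crux is the reduction to finite étale covers via the structure theory of étale morphisms, interlocked with the Zariski step; only once the cover is finite étale do the descendability needed for generation and the identification $f_*\simeq f_!$ needed for compactness hold simultaneously. A secondary point to verify carefully is that the Mayer--Vietoris gluing of compact generators is compatible with the $R$-linear rather than merely triangulated structure, which is precisely where working with stable presentable $\infty$-categories rather than their homotopy categories is essential.
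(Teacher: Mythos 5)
Your global step---the Zariski Mayer--Vietoris induction over a finite affine cover, with Thomason--Neeman-style gluing of compact generators and Koszul-type objects generating the kernels of restriction to opens---is in substance the paper's own argument, which runs the induction of \cite{ag}*{Theorem 6.11} and supplies the compactly generated kernels by a separate lemma. (One quibble: compact generators are not ``pushed forward from the open $V$''; pushforward along an open immersion destroys compactness, as $j_*\Oscr_{\Gm}$ over $\AA^1$ already shows, and one instead \emph{extends} compact objects from opens via Thomason--Neeman.) The real divergence is the affine case. The paper proves nothing there: it quotes Lurie's \'etale descent theorem \cite{dag11}*{Theorem 6.1} as a black box. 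You attempt to prove it, and your proof hinges on the assertion that an \'etale surjection of affines is refinable, after Zariski localization on the base, by finite \'etale surjections. That assertion is \emph{false}. Let $k$ be algebraically closed of characteristic zero, $S=k[z]$, $T=k[w][1/(w(w-2))]$, and let $S\to T$ send $z\mapsto w^3-3w^2$. This is an \'etale surjection of affines: the derivative $3w(w-2)$ is a unit in $T$, and every fiber of $w\mapsto w^3-3w^2$ meets $\Spec T$ (the three roots sum to $3$, whereas any multiset drawn from $\{0,2\}$ has even sum). Now let $U\subseteq\Spec S$ be any Zariski open containing $z=0$, and suppose $V\to U$ is finite \'etale, surjective, with a $U$-morphism $\phi\colon V\to Y_U:=\Spec T\times_{\Spec S}U$. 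Then $\phi$ is \'etale (a morphism between schemes \'etale over $U$) and proper ($V$ is proper over $U$ and $Y_U$ is separated over $U$), so its image is a nonempty open and closed subset of $Y_U$; since $Y_U$ is a cofinite open in the affine $w$-line, it is connected, so $\phi$ is surjective. Consequently $Y_U\to U$ is affine, of finite type, and universally closed (being dominated by the proper surjection $V\to U$), hence finite. But it is not finite: $\Oscr(Y_U)$ contains $1/w$, which is not integral over $\Oscr(U)=k[z][1/h]$, because the integral closure of $k[z][1/h]$ in $k(w)$ is $k[w][1/h(w^3-3w^2)]$ and $w\nmid h(w^3-3w^2)$ since $h(0)\neq0$. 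So no finite \'etale refinement exists near $z=0$, after any Zariski shrinking whatsoever.

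The error is a conflation built into the phrase ``local structure theory'': a standard \'etale presentation $T_g=S[t]_g/(h)$, $h$ monic, embeds $\Spec T_g$ into the finite \emph{free} cover $\Spec S[t]/(h)$, but that cover is \'etale only on $D(h')$, and Zariski shrinking of the base cannot separate its non-\'etale points from the \'etale ones when both lie over the same point of $\Spec S$---exactly what happens in the example above. The true statements each break a different leg of your argument. First, \'etale surjections of affines are refinable by finite \emph{locally free} surjections (a lemma found in the Stacks Project, Tag 02LH); such covers are descendable, but they are not \'etale, so the ambidexterity $f_*\simeq f_!$ that you use to push compact generators forward is lost, and one would have to substitute a projection-formula and dualizability argument for the twisted categories $\Mod^\alpha$ over finite flat maps. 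Second, \'etale covers do split off finite \'etale pieces \emph{Nisnevich}-locally, and the \'etale topology is generated by the Nisnevich topology together with finite \'etale covers; but Nisnevich localization passes to \'etale neighborhoods, not Zariski opens, so it cannot be absorbed by your Mayer--Vietoris step. Handling that is precisely the excision induction constituting Lurie's proof of \cite{dag11}*{Theorem 6.1}, which the paper cites rather than reproves. As written, then, your affine step has a genuine gap, and patching it amounts to reproving the theorem you were implicitly relying on.
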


A special case of the telescope hypothesis is needed in the proof of the theorem.

\begin{lemma}
    Let $Z=\Spec S$ be an affine scheme and $W\subseteq Z$ a quasi-compact Zariski open inclusion.
    Let $\alpha:Z\rightarrow\StCat^{\desc}$ classify a stack of linear categories
    $\StMod^\alpha$. If $\Mod_Z^\alpha$ is compactly generated, then the
    kernel $\Mod_{Z,Z-W}^\alpha$
    of $\Mod_Z^\alpha\rightarrow\Mod_W^\alpha$ is compactly generated by compact objects of
    $\Mod_Z^\alpha$.
    \begin{proof}
        Because tensor products of stable presentable $\infty$-categories are computed as
        functors~\cite{ha}*{Proposition~6.3.1.16}, it follows that the exact sequence
        \begin{equation*}
            \Mod_{Z,Z-W}^\alpha\rightarrow\Mod_Z^\alpha\rightarrow\Mod_W^\alpha
        \end{equation*}
        is obtained from
        \begin{equation*}
            \Mod_{Z,Z-W}\rightarrow\Mod_Z\rightarrow\Mod_W
        \end{equation*}
        by tensoring with $\Mod_Z^\alpha$ over $\Mod_Z$. By~\cite{ag}*{Proposition~6.9},
        $\Mod_{Z,Z-W}$ is generated by a single compact object. Since, by hypothesis,
        $\Mod_Z^\alpha$ is compactly generated, it follows that
        \begin{equation*}
            \Mod_{Z,Z-W}^\alpha\we\Mod_{Z,Z-W}\otimes_{\Mod_Z}\Mod_Z^\alpha
        \end{equation*}
        is compactly generated (see~\cite{bgt1}*{Section 3.1}).
    \end{proof}
\end{lemma}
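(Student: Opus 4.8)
The plan is to identify the twisted localization sequence as the base change of the corresponding geometric one along $-\otimes_{\Mod_Z}\Mod_Z^\alpha$, and then to read off compact generation formally. First I would recall that, since $W\subseteq Z$ is a quasi-compact open, its complement $Z-W$ is the vanishing locus of finitely many functions $f_1,\dots,f_n\in S$, and that the restriction $\Mod_Z\rightarrow\Mod_W$ is a smashing localization whose kernel $\Mod_{Z,Z-W}$ is compactly generated by the single perfect object given by the Koszul complex $K=K(f_1,\dots,f_n)$; this is exactly \cite{ag}*{Proposition~6.9}, after B\"okstedt--Neeman~\cite{bokstedt-neeman}. Thus in the geometric case the sequence
\begin{equation*}
    \Mod_{Z,Z-W}\rightarrow\Mod_Z\rightarrow\Mod_W
\end{equation*}
is a localization sequence of $\Mod_Z$-linear presentable stable categories with compactly generated kernel.

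The heart of the argument is to show that applying $-\otimes_{\Mod_Z}\Mod_Z^\alpha$ to this sequence produces the twisted sequence $\Mod_{Z,Z-W}^\alpha\rightarrow\Mod_Z^\alpha\rightarrow\Mod_W^\alpha$. This has two ingredients. For the quotient term I would identify $\Mod_W^\alpha\we\Mod_W\otimes_{\Mod_Z}\Mod_Z^\alpha$: because the localization $\Mod_Z\rightarrow\Mod_W$ is smashing it is the localization at an idempotent $\Mod_Z$-algebra $A$ with $\Mod_W\we\Mod_A$, and the stack property for $\StMod^\alpha$ restricted to $W$ identifies $\Mod_W^\alpha$ with the corresponding smashing localization of $\Mod_Z^\alpha$, that is, with $\Mod_A\otimes_{\Mod_Z}\Mod_Z^\alpha$. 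For the kernel term I would use that $-\otimes_{\Mod_Z}\Mod_Z^\alpha$ preserves colimits in $\Prlst$, hence carries the cofiber sequence defining the quotient to a cofiber sequence, and then check that for a smashing localization the base-changed kernel is again the kernel of the base-changed quotient map. The functor-as-modules description of the tensor product \cite{ha}*{Proposition~6.3.1.16} is what makes all of this bookkeeping go through. This identification is the step I expect to be the main obstacle, since it is where the stack structure and the smashing hypothesis interact.

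Granting the base change, compact generation is formal. Choose a set $\{G_i\}$ of compact generators of $\Mod_Z^\alpha$, which exists by hypothesis. The objects $K\otimes G_i$, formed via the $\Mod_Z$-action on $\Mod_Z^\alpha$, lie in $\Mod_{Z,Z-W}^\alpha$ because $K$ is supported on $Z-W$, and they generate the base-changed kernel since $K$ generates $\Mod_{Z,Z-W}$ and the $G_i$ generate $\Mod_Z^\alpha$; this is the compact-generation statement for base change recorded in \cite{bgt1}*{Section 3.1}. Finally, each $K\otimes G_i$ is compact in $\Mod_Z^\alpha$, not merely in the kernel: $K$ is a perfect, hence dualizable, object of $\Mod_Z$, so tensoring with $K$ preserves compactness, and moreover the smashing hypothesis forces the inclusion $\Mod_{Z,Z-W}^\alpha\hookrightarrow\Mod_Z^\alpha$ to preserve compact objects, since its right adjoint, the acyclization, preserves coproducts. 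This produces compact generators of $\Mod_{Z,Z-W}^\alpha$ that are compact in $\Mod_Z^\alpha$, as required.
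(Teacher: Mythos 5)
Your proposal is correct and follows essentially the same route as the paper: identify the twisted localization sequence as the base change of $\Mod_{Z,Z-W}\rightarrow\Mod_Z\rightarrow\Mod_W$ along $-\otimes_{\Mod_Z}\Mod_Z^\alpha$ using \cite{ha}*{Proposition~6.3.1.16}, invoke \cite{ag}*{Proposition~6.9} for the single compact (Koszul) generator, and conclude compact generation via \cite{bgt1}*{Section 3.1}. Your added verifications --- that the generators are of the form $K\otimes G_i$ and are compact in the ambient $\Mod_Z^\alpha$ by dualizability of $K$ (equivalently, because the localization is smashing) --- merely make explicit what the paper delegates to the cited references.
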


Say that an object of $\Mod_X^\alpha$ is perfect if for every $\Spec S\rightarrow X$ the
pullback $x_S$ is compact in $\Mod_S^\alpha$.

\begin{proposition}
    In the situation of the theorem, perfect objects of $\Mod_X^\alpha$ are compact.
    \begin{proof}
        First, this is true on
        affine schemes by definition. Second, if $X=U\cup V$ where $U$ and $V$ are open
        subschemes, and if it is true for $U$ and $V$ and $U\cap V$, then it is true for
        $X$. Indeed, in this case, $X$ is the \emph{finite} colimit $U\cap
        V\rightrightarrows U\coprod V\rightarrow X$. Thus, $\Mod_X^\alpha$ is the fiber in
        \begin{equation*}
            \Mod_X^\alpha\rightarrow\Mod_U^\alpha\times\Mod_V^\alpha\rightrightarrows\Mod_{U\cap
            V}^\alpha.
        \end{equation*}
        Given objects $x,y\in\Mod_X^\alpha$, this means that we can compute the mapping spectrum
        $\Map_X(x,y)$ as a limit
        \begin{equation*}
            \Map_X(x,y)\rightarrow\Map_U(x_U,y_U)\times\Map_V(x_V,y_V)\rightrightarrows\Map_{U\cap V}(x_{U\cap V},y_{U\cap V}).
        \end{equation*}
        If $x$ is perfect in $\Mod_X^\alpha$, then it is compact on $U$, $V$, and $U\cap V$ by
        hypothesis. Since filtered colimits commute with finite limits, it then follows that
        $x$ is compact in $X$, as desired.
        Finally, the result holds for arbitrary quasi-compact and quasi-separated schemes by
        the so-called reduction principle~\cite{bondal-vandenbergh}*{Proposition 3.3.1}.
    \end{proof}
\end{proposition}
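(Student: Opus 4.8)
The plan is to run a Mayer--Vietoris induction on an affine cover of $X$, reducing to the affine case in which perfectness and compactness coincide by definition. The base case is then immediate: for $X=\Spec S$ affine, an object is compact in $\Mod_S^\alpha$ exactly when it is perfect, so there is nothing to check.

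The heart of the argument is the gluing step. Suppose $X=U\cup V$ for quasi-compact opens $U,V$ such that the proposition already holds for $U$, $V$, and $U\cap V$. Writing $X$ as the pushout of $U\cap V\rightrightarrows U\coprod V$ and invoking descent for the stack $\StMod^\alpha$, one expresses $\Mod_X^\alpha$ as the finite limit
\begin{equation*}
    \Mod_X^\alpha\we\lim\bigl(\Mod_U^\alpha\times\Mod_V^\alpha\rightrightarrows\Mod_{U\cap V}^\alpha\bigr).
\end{equation*}
Since mapping spectra in a limit of stable presentable $\infty$-categories are computed as the limit of the mapping spectra, for $x,y\in\Mod_X^\alpha$ the spectrum $\Map_X(x,y)$ is the corresponding finite limit built from $\Map_U(x_U,y_U)$, $\Map_V(x_V,y_V)$, and $\Map_{U\cap V}(x_{U\cap V},y_{U\cap V})$. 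If $x$ is perfect, then by the inductive hypothesis each of these three functors commutes with filtered colimits in $y$; as filtered colimits commute with finite limits of spectra, $\Map_X(x,-)$ commutes with filtered colimits, so $x$ is compact.

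To upgrade this two-chart gluing to an arbitrary quasi-compact and quasi-separated scheme, I would appeal to the reduction principle of Bondal--Van den Bergh~\cite{bondal-vandenbergh}*{Proposition 3.3.1}: every such $X$ is obtained from affines by repeatedly taking the union of two quasi-compact and quasi-separated opens. Combining the affine base case with the gluing step through this induction yields the proposition for all quasi-compact and quasi-separated $X$.

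The step I expect to be the crux is the gluing argument, and more precisely the claim that descent presents $\Mod_X^\alpha$ as the displayed finite limit with mapping spectra computed levelwise. This is exactly where the stable $\infty$-categorical enhancement does real work: the sheaf property of $\StMod^\alpha$ supplies the limit presentation, and the homotopy-coherent formalism guarantees that the mapping spectra assemble into the same finite limit, something not visible at the level of the underlying triangulated categories. Once that is in hand, the interchange of filtered colimits with finite limits makes compactness descend formally, and the reduction principle handles the combinatorics of the cover.
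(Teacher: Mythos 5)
Your proposal is correct and follows essentially the same route as the paper's own proof: affine base case by definition, a Mayer--Vietoris gluing step in which $\Mod_X^\alpha$ is presented as a finite limit and mapping spectra are computed levelwise so that compactness follows from the commutation of filtered colimits with finite limits, and finally the Bondal--Van den Bergh reduction principle to reach all quasi-compact and quasi-separated schemes. The point you single out as the crux --- that the stable $\infty$-categorical descent formalism supplies the limit presentation of mapping spectra --- is exactly the mechanism the paper relies on as well.
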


\begin{proof}[Proof of Theorem~\ref{thm:lgcptg}]
    The proof is essentially a transcription of the proof of~\cite{ag}*{Theorem 6.11}, with
    a couple of alterations. The base case of the induction step is that if $X$ is affine then
    \'etale local compact generation implies global compact generation. Moreover, in that
    case any compact objects is perfect. This step is
    provided by~\cite{dag11}*{Theorem 6.1}. The compact generation of the kernels is
    provided by the lemma. The rest of the proof goes through, except that one
    lifts \emph{sets} of compact generators up to $X$ using the gluing methods
    of~\cite{ag}*{Theorem 6.11}. Details are left to the reader. The last step is to note
    that one has built up perfect objects, which are compact by the proposition.
\end{proof}

\begin{corollary}\label{cor:perfect}
    In the situation of the theorem, the compact objects of $\Mod_X^\alpha$ are precisely
    the perfect objects.
    \begin{proof}
        Since perfect objects are compact and generate $\Mod_X^\alpha$, the theorem of Ravenel and Neeman~\cite{neeman-1992}*{Theorem
        2.1} shows that the subcategory of compact objects of $\Mod_X^\alpha$ is the
        idempotent completion of the subcategory of perfect objects. But, the subcategory of
        perfect objects is already idempotent-complete, as can be seen by looking locally.
    \end{proof}
\end{corollary}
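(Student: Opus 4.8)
The plan is to establish the two inclusions separately. One is already available: the proposition shows that every perfect object is compact. For the reverse inclusion, that every compact object is perfect, I would appeal to the characterization of compact objects in a compactly generated triangulated category due to Neeman, building on Ravenel and Thomason~\cite{neeman-1992}*{Theorem 2.1}: if a set of compact objects generates the category as a localizing subcategory, then the subcategory of all compact objects is the idempotent completion of the thick subcategory generated by that set.

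To feed this theorem I first need a generating set made up of perfect objects. This is exactly what the proof of Theorem~\ref{thm:lgcptg} produces: the compact generators assembled there are glued together from pullbacks of compact objects on affine opens, and are therefore compact after pullback to \emph{every} $\Spec S\to X$, hence perfect. Thus $\Ho(\Mod_X^\alpha)$ is generated by a set of perfect objects, and Neeman's theorem identifies its compact objects with the idempotent completion of the triangulated subcategory these generate.

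The conceptually central step is then to see that passing to the idempotent completion cannot leave the class of perfect objects. For this I would check that the perfect objects form an idempotent-complete triangulated (hence thick) subcategory of $\Ho(\Mod_X^\alpha)$. Closure under shifts and cones follows because each pullback $x\mapsto x_S$ is exact and compactness in $\Mod_S^\alpha$ is preserved under shifts and cones, so perfectness is stable under the triangulated operations. Closure under direct summands is the statement that a retract of a perfect object is perfect, which is visible locally: pulling back to $\Spec S$ sends a retract of $x$ to a retract of the compact object $x_S$, and a retract of a compact object is compact. Since the triangulated subcategory generated by the perfect generators is contained in the idempotent-complete subcategory of perfect objects, its idempotent completion is again contained in the perfect objects; by Neeman's theorem this idempotent completion is precisely the subcategory of compact objects, so every compact object is perfect.

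Together with the proposition, this yields the claimed equality of the compact and the perfect objects. I expect the only genuine subtlety to be the first point: one must know that the generators extracted from the proof of Theorem~\ref{thm:lgcptg} are perfect, and not merely compact on $X$. This is guaranteed by the construction, since those generators arise by gluing pullbacks of objects that are compact on affines, and compactness on affines is precisely perfectness there.
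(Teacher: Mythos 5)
Your proposal is correct and follows essentially the same route as the paper: perfect objects are compact by the proposition, they generate $\Mod_X^\alpha$ by the construction in Theorem~\ref{thm:lgcptg}, Neeman's theorem~\cite{neeman-1992}*{Theorem 2.1} identifies the compact objects with the idempotent completion of (the thick subcategory generated by) the perfect objects, and idempotent-completeness of the perfects is checked locally. Your write-up merely makes explicit two points the paper leaves implicit, namely that the glued generators are perfect and that the perfects form a thick subcategory.
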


\section{Telescopy}\label{sec:telescopy}

A localization of a stable presentable $\infty$-category $\Cscr$ is an adjunction
\begin{equation*}
    j:\Cscr\rightleftarrows\Dscr:j_\rho
\end{equation*}
where the right adjoint $j_\rho$ is fully faithful.

Recall that if $\Mscr$ is a symmetric
monoidal stable presentable $\infty$-category, then we can consider ``modules'' for $\Mscr$,
which are stable presentable $\infty$-categories $\Cscr$ with a tensor product
$\otimes:\Mscr\times\Cscr\rightarrow\Cscr$ satisfying various nice properties, most
importantly the preservation of homotopy colimits in each variable. These
$\infty$-categories together with the $\otimes$-structure will be called $\Mscr$-linear
categories. By working in $\Prl$,
the symmetric monoidal $\infty$-category of presentable $\infty$-categories and right
adjoint functors, an $\Mscr$-linear category $\Cscr$ is precisely a (left) module for the
commutative algebra object $\Mscr$. See~\cite{ha}*{Section 6.3}.

If $\Cscr$ is an $\Mscr$-linear category, then a localization
$j:\Cscr\rightleftarrows\Dscr:j_\rho$ is $\Mscr$-linear if it is a localization in the
$\infty$-category of $\Mscr$-modules in $\Prl$. This can be checked in a more down-to-earth
way by showing that $\ker(j)$ is closed under tensor product with $\Mscr$.

A localization is
smashing if $j_\rho$ preserves small coproducts. Note that because these stable
$\infty$-categories are presentable, preserving coproducts is equivalent to preserving all
small colimits in the $\infty$-categorical sense, by~\cite{ha}*{Proposition 1.4.4.1}.

A localization of stacks consists of an adjunction
\begin{equation*}
    j:\StMod^\alpha\rightleftarrows\StMod^\beta:j_\rho,
\end{equation*}
where $\StMod^\alpha$ and $\StMod^\beta$ are stacks of linear categories, such that $j_\rho$ is fully
faithful. By definition, to give such an adjunction is to give a compatible family of
$S$-linear adjunctions
\begin{equation*}
    j_S:\Mod^\alpha_S\rightleftarrows\Mod^\beta_S:j_{S,\rho}
\end{equation*}
for every $\Spec S\rightarrow X$. Then, the functor $j_\rho$ is fully faithful if
each $j_{S,\rho}$ is fully faithful. The localization of stacks is smashing if each $j_{S,\rho}$ preserves
small coproducts.

Given a localization $j:\Cscr\rightarrow\Dscr$, there is a kernel $\ker(j)$, the full
subcategory of $\Cscr$ of objects $x$ such that $j(x)\we 0$. Given an $\Mscr$-linear
localization, the kernel $\ker(j)$ is itself $\Mscr$-linear. For a localization of stacks
$j:\StMod^\alpha\rightarrow\StMod^\beta$, the family of kernels determines itself a stack of
linear categories $\StMod^\gamma$ by setting $\Mod^\gamma_S=\ker(j_S)$. To see this, it
suffices to check when $X=\Spec S$, in other words in the case of $S$-linear categories with
descent. But, the kernel is a limit in $\Cat_S^{\desc}$, so it can be computed \'etale
locally, since limits commute.

\begin{definition}

    Let $\Cscr$ be a compactly generated stable presentable $\infty$-category. Then, $\Cscr$
    satisfies the telescope hypothesis (TH) if the kernel of every smashing localization $j:\Cscr\rightarrow\Dscr$
    is generated by compact objects of $\Cscr$.

    Now, suppose that $\Mscr$ is a compactly generated symmetric monoidal stable presentable
    $\infty$-category and that $\Cscr$ is a compactly generated $\Mscr$-linear category.
    Say that $\Cscr$ satisfies the $\Mscr$-linear telescope
    hypothesis (LTH) if the kernel of every $\Mscr$-linear smashing localization
    $j:\Cscr\rightarrow\Dscr$ is generated by compact objects of $\Cscr$.

    Finally, suppose that $X$ is an \'etale sheaf over $R$, and let
    $\alpha:X\rightarrow\StCat^\desc$ classify a stack $\StMod^\alpha$ of linear
    categories. Say that $\StMod^\alpha$ satisfies the stacky telescope hypothesis (STH) if for every
    smashing localization of stacks $\StMod^\alpha\rightarrow\StMod^\beta$ and every map $\Spec
    S\rightarrow X$, the kernel of $\Mod_S^\alpha\rightarrow\Mod_S^\beta$ is generated
    by compact objects of $\Mod_S^\alpha$.

\end{definition}

In the literature, what is called here the telescope hypothesis is often called the
telescope conjecture. Since it is false in general, hypothesis seems more appropriate.

\begin{lemma}\label{lem:unital}
    If $\Mscr$ is a symmetric monoidal stable presentable $\infty$-category that is compactly generated
    by its unit, and if $\Cscr$ is a compactly generated $\Mscr$-linear category, then $\Cscr$ satisfies the $\Mscr$-linear telescope hypothesis if and only
    if $\Cscr$ satisfies the telescope hypothesis.
    \begin{proof}
        If $\Cscr$ satisfies the telescope hypothesis, then it satisfies the less
        restrictive $\Mscr$-linear telescope hypothesis. Conversely, we claim that any
        localization $j:\Cscr\rightleftarrows\Dscr:j_\rho$ is automatically $\Mscr$-linear.
        It suffices to show that $\ker(j)$ is closed under tensor product with $\Mscr$.
        Contemplation of the following four facts completes the proof.
        The localizing subcategory $\ker(j)\subseteq\Cscr$ is closed under homotopy colimits
        by definition. The tensor
        product preserves homotopy colimits in each variable. The symmetric monoidal stable
        $\infty$-category $\Mscr$ is generated under
        homotopy colimits by its unit $\mathds{1}_\Mscr$. Obviously, $\mathds{1}_\Mscr\otimes
        x\in\ker(j)$ for $x\in\ker(j)$.
    \end{proof}
\end{lemma}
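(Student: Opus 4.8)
The plan is to prove the two implications separately, with essentially all of the content concentrated in the direction $\mathrm{LTH}\Rightarrow\mathrm{TH}$. The forward direction is immediate and uses nothing special about $\Mscr$: every $\Mscr$-linear smashing localization is in particular a smashing localization, so if the kernel of \emph{every} smashing localization of $\Cscr$ is generated by compact objects, then a fortiori the kernel of every $\Mscr$-linear one is. Hence the telescope hypothesis formally implies the a priori weaker $\Mscr$-linear telescope hypothesis.

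For the converse, the strategy I would take is to show that under the standing hypothesis the two classes of localizations actually coincide: \emph{every} localization $j:\Cscr\rightleftarrows\Dscr:j_\rho$ is automatically $\Mscr$-linear. Granting this, the class of smashing localizations agrees with the class of $\Mscr$-linear smashing localizations, so applying the LTH to each such $j$ recovers the TH verbatim. By the down-to-earth reformulation of $\Mscr$-linearity recorded after the definition of a localization, it suffices to check that $\ker(j)$ is closed under the tensor action of $\Mscr$, i.e.\ that $m\otimes x\in\ker(j)$ whenever $m\in\Mscr$ and $x\in\ker(j)$.

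To verify this closure I would assemble four ingredients. First, $\ker(j)$ is a localizing subcategory of $\Cscr$: since $j$ is a left adjoint it preserves colimits, so its kernel is closed under homotopy colimits. Second, the action $\otimes\colon\Mscr\times\Cscr\to\Cscr$ preserves homotopy colimits separately in each variable, by the definition of an $\Mscr$-linear category. Third — and this is where the assumption that $\mathds{1}_\Mscr$ is a compact generator is used — the localizing subcategory of $\Mscr$ generated by the unit is all of $\Mscr$, so every $m\in\Mscr$ is a homotopy colimit of a diagram whose entries are shifts of $\mathds{1}_\Mscr$. Fourth, $\mathds{1}_\Mscr\otimes x\we x\in\ker(j)$ for $x\in\ker(j)$. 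Combining these, $m\otimes x\we\colim_i(\mathds{1}_\Mscr\otimes x)\we\colim_i x$ is a homotopy colimit of objects lying in $\ker(j)$, and so lies in $\ker(j)$ by the first point.

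The only step that is not purely formal is the third one, so I expect the main (modest) obstacle to be correctly invoking the role of the unit: one must use that being a compact generator really does mean $\mathds{1}_\Mscr$ generates $\Mscr$ under homotopy colimits, so that an arbitrary $m$ can be resolved by copies of the unit, and one must trust that closure of $\ker(j)$ under the tensor action upgrades $j$ to a localization in $\Mod_\Mscr(\Prl)$ rather than merely in $\Prl$. Both are already available: the former is the definition of a compact generator in a presentable stable $\infty$-category, and the latter is exactly the equivalent characterization of $\Mscr$-linearity noted earlier. I therefore anticipate no essential difficulty beyond marshalling these facts in the right order.
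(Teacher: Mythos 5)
Your proposal is correct and follows essentially the same route as the paper: the forward direction is formal, and the converse shows every localization is automatically $\Mscr$-linear by checking $\ker(j)$ is closed under the $\Mscr$-action, using exactly the paper's four facts (kernel closed under colimits, $\otimes$ preserves colimits in each variable, $\Mscr$ generated under colimits by $\mathds{1}_\Mscr$, and $\mathds{1}_\Mscr\otimes x\in\ker(j)$). The only cosmetic difference is that you phrase generation as ``every $m$ is a colimit of shifts of the unit,'' whereas the cleaner (and intended) reading is that the full subcategory of $m\in\Mscr$ with $m\otimes\ker(j)\subseteq\ker(j)$ is localizing and contains the unit, hence is all of $\Mscr$; this repairs the slight imprecision without changing the argument.
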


The conclusion is closely related to an
observation of Thomason~\cite{thomason-triangulated}*{Corollary 3.11.1(a)}: every thick subcategory of $\Perf(\Spec R)$ for a
commutative ring $R$ is automatically a $\otimes$-ideal.

Recall that the $\Mod_X$-linear category of global sections of $\StMod^\alpha$ is
\begin{equation*}
    \Mod^\alpha_X=\lim_{\Spec S\rightarrow X}\Mod_S^\alpha.
\end{equation*}
The following theorem allows passage back and forth between the linear and the
stacky telescope hypotheses.

\begin{theorem}\label{thm:stacks}
    If $X$ is a quasi-compact and quasi-separated derived scheme over $R$ and
    $\alpha:X\rightarrow\StCat^\desc$, then
    $\StMod^\alpha$ satisfies the stacky telescope hypothesis if and only if $\Mod^\alpha_X$
    satisfies the $\Mod_X$-linear telescope hypothesis.
    \begin{proof}
        Suppose first that $\StMod^\alpha$ satisfies the stacky telescope hypothesis, and let
        $j:\Mod^\alpha_X\rightarrow\Dscr$ be a $\Mod_X$-linear smashing localization of
        $\Mod_X$-linear categories.
        Because $\StMod^\alpha\rightarrow 0$ is a smashing localization, the stacky
        telescope hypothesis for $\StMod^\alpha$ says that
        $\Mod^\alpha_S$ is compactly generated for every $\Spec
        S\rightarrow X$.
        By Theorem~\ref{thm:lgcptg}, it follows that
        $\Mod^\alpha_X$ is compactly generated, which in turn implies that $\Dscr$ is
        compactly generated. Indeed, because $j_\rho$ preserves coproducts, adjunction
        implies that $j$ preserves compact objects. Since $j_\rho$ is fully faithful, we can
        take as a set of compact generators of $\Dscr$ the image under $j$ of a set of
        compact generators of $\Mod_X^\alpha$. Define
        \begin{equation*}
            \Mod^\beta_S=\Mod_S\otimes_{\Mod_X}\Dscr.
        \end{equation*}
        Since $\Dscr$ is compactly generated, it follows from~\cite{dag11}*{Corollary 6.11}
        that $\StMod^\beta$ is indeed a stack of linear categories. We claim that
        $\StMod^\alpha\rightarrow\StMod^\beta$ is a smashing localization of stacks. But,
        this is clear because the adjoint
        \begin{equation*}
            j_{S,\rho}:\Mod^\beta_S\rightarrow\Mod^\alpha_S
        \end{equation*}
        can be written as
        \begin{equation*}
            \id_{\Mod_S}\otimes_{\Mod_X}j_\rho:\Mod_S^\beta\we\Mod_S\otimes_{\Mod_X}\Dscr\rightarrow\Mod_S\otimes_{\Mod_X}\Mod_X^\alpha\we\Mod_S^\alpha.
        \end{equation*}
        Since $j_\rho:\Dscr\rightarrow\Mod_X^\alpha$ preserves coproducts, so does
        $j_{S,\rho}$. Similarly, $\id_{\Mod_S}\otimes_{\Mod_X}j_\rho$ is fully faithful
        because $j_\rho$ is. By the stacky telescope hypothesis for $\StMod^\alpha$, it follows
        that each $\ker(j_S)$ is compactly generated by objects in $\Mod_S^\alpha$. Again,
        using Theorem~\ref{thm:lgcptg},
        it follows that the $\infty$-category of sections
        of the kernel stack is compactly generated. By Corollary~\ref{cor:perfect}, the
        compact objects of $\ker(j)$ are perfect. Thus, they are \'etale locally compact in
        $\Mod_X^\alpha$. So, they are perfect, and hence compact, in $\Mod_X^\alpha$. Thus,
        $\ker(j)$ is compactly generated by compact objects of $\Mod_X^\alpha$.

        Now, assume that $\Mod^\alpha_X$ satisfies the $\Mod_X$-linear telescope hypothesis. Let
        $\StMod^\alpha\rightarrow\StMod^\beta$ be a smashing localization of stacks.
        Consider the induced functor $\Mod^\alpha_X\rightarrow\Mod^\beta_X$, which we claim
        is a $\Mod_X$-linear smashing localization. Since
        mapping spaces can be computed locally, the right adjoint $j_{X,\rho}$ is
        fully faithful. Similarly, if $\colim_I y_i\rwe y$ is a colimit diagram in
        $\Mod_X^\beta$, then the natural map $\colim_I j_{X,\rho}(y_i)\rightarrow
        j_{X,\rho}(y)$ is locally an equivalence. Thus, it is an equivalence in
        $\Mod_X^\alpha$, so that $j_{X,\rho}$ preserves coproducts. This proves the claim.
        Now, let $\Kscr_X=\ker(j_X)$, and set $\Kscr_S=\Mod_S\otimes_{\Mod_X}\Kscr_X$ for every $\Spec
        S\rightarrow X$. Since the $\Mod_X$-linear telescopy hypothesis applied to $j$ says
        that $\Kscr_X$ is generated by compact objects of $\Mod_X^\alpha$, it follows that
        $\Kscr_X$ is,
        in particular, compactly generated, so that $\Kscr$ defines a stack of linear
        categories by~\cite{dag11}*{Corollary 6.11}.
        It follows immediately that $\Kscr_S$ is generated by compact objects of
        $\Mod_S^\alpha\we\Mod_S\otimes_{\Mod_X}\Mod_X^\alpha$. But, $\Kscr_S$ is also the
        kernel of $j_S$, as tensoring with $\Mod_S$ over $\Mod_X$ preserves the exact
        sequence
        \begin{equation*}
            \Kscr_X\rightarrow\Mod_X^\alpha\rightarrow\Mod_X^\beta.
        \end{equation*}
        Therefore, the kernel of $j_S$ is generated by compact objects of $\Mod_S^\alpha$
        for every $\Spec S\rightarrow X$. So, $\StMod^\alpha$ satisfies the stacky telescope
        hypothesis.
    \end{proof}
\end{theorem}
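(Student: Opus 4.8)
The plan is to prove both implications of the equivalence by passing back and forth between a single $\Mod_X$-linear localization of global sections and a compatible family of localizations of the fibers $\Mod_S^\alpha$. The two pieces of machinery that make this possible are the local-global principle for compact generation (Theorem~\ref{thm:lgcptg}) and the base-change construction $\Mod_S\otimes_{\Mod_X}(-)$, which turns a compactly generated $\Mod_X$-linear category into a stack of linear categories by~\cite{dag11}*{Corollary 6.11}. The identification of the compact objects of $\Mod_X^\alpha$ with the perfect objects (Corollary~\ref{cor:perfect}) is what lets me transport compactness between the fibers and the total space.

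For the forward direction, assume the stacky telescope hypothesis and let $j:\Mod_X^\alpha\rightleftarrows\Dscr:j_\rho$ be a $\Mod_X$-linear smashing localization. First I would observe that applying the stacky hypothesis to the terminal localization $\StMod^\alpha\rightarrow 0$ shows that every $\Mod_S^\alpha$ is compactly generated, whence $\Mod_X^\alpha$ is compactly generated by Theorem~\ref{thm:lgcptg}; since $j_\rho$ preserves coproducts and is fully faithful, $j$ preserves compact objects and $\Dscr$ is compactly generated by the images of a set of compact generators. I would then manufacture the target stack by setting $\Mod_S^\beta=\Mod_S\otimes_{\Mod_X}\Dscr$ and checking that $\StMod^\alpha\rightarrow\StMod^\beta$ is a smashing localization of stacks, the fiberwise right adjoint being $\id_{\Mod_S}\otimes_{\Mod_X}j_\rho$, which inherits full faithfulness and coproduct-preservation from $j_\rho$. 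The stacky hypothesis now gives that each $\ker(j_S)$ is compactly generated by objects of $\Mod_S^\alpha$; applying Theorem~\ref{thm:lgcptg} once more, the kernel stack has compactly generated global sections, and Corollary~\ref{cor:perfect} identifies its compact objects as perfect, hence compact in $\Mod_X^\alpha$. This produces the desired set of compact generators of $\ker(j)$.

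For the reverse direction, assume the $\Mod_X$-linear telescope hypothesis and let $\StMod^\alpha\rightarrow\StMod^\beta$ be a smashing localization of stacks. I would pass to global sections $j_X:\Mod_X^\alpha\rightarrow\Mod_X^\beta$ and verify it is a $\Mod_X$-linear smashing localization, using that mapping spaces and colimits in $\Mod_X^\alpha$ are computed locally to get the full faithfulness and coproduct-preservation of $j_{X,\rho}$. The hypothesis then says $\Kscr_X=\ker(j_X)$ is generated by compact objects of $\Mod_X^\alpha$, so in particular $\Kscr_X$ is compactly generated and $\Kscr_S=\Mod_S\otimes_{\Mod_X}\Kscr_X$ defines a stack by~\cite{dag11}*{Corollary 6.11}. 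Base change along $\Mod_S\otimes_{\Mod_X}(-)$ preserves the exact sequence $\Kscr_X\rightarrow\Mod_X^\alpha\rightarrow\Mod_X^\beta$, so $\Kscr_S$ is the kernel of $j_S$; and since the generators of $\Kscr_X$ are compact and $\Mod_S^\alpha\we\Mod_S\otimes_{\Mod_X}\Mod_X^\alpha$, their images compactly generate $\ker(j_S)$ by objects of $\Mod_S^\alpha$.

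The main obstacle I expect is organizing the bookkeeping around compactness so that it transfers correctly between the fibers and the global sections in both directions. Concretely, the delicate point is that the fiberwise statement ``generated by compact objects of $\Mod_S^\alpha$'' must be upgraded to the global statement ``generated by compact objects of $\Mod_X^\alpha$''; this is exactly where Theorem~\ref{thm:lgcptg} and Corollary~\ref{cor:perfect} are indispensable, since a priori a compact object of the kernel is only perfect, that is \'etale locally compact, rather than manifestly compact in $\Mod_X^\alpha$. Establishing that $\Mod_S\otimes_{\Mod_X}(-)$ genuinely produces a stack rather than merely a presheaf, and that it commutes with the formation of kernels, is the other point requiring care, and both rest on the compact generation being available at each stage.
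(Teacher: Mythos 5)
Your proposal is correct and follows essentially the same route as the paper's own proof: the terminal localization $\StMod^\alpha\rightarrow 0$ to extract fiberwise compact generation, Theorem~\ref{thm:lgcptg} to globalize it, the base-change construction $\Mod_S\otimes_{\Mod_X}(-)$ with \cite{dag11}*{Corollary 6.11} to build the stack $\StMod^\beta$ (respectively the kernel stack $\Kscr$), and Corollary~\ref{cor:perfect} to upgrade fiberwise compactness of kernel objects to compactness in $\Mod_X^\alpha$. The reverse direction, via local computation of mapping spaces and colimits and base change of the exact sequence $\Kscr_X\rightarrow\Mod_X^\alpha\rightarrow\Mod_X^\beta$, also matches the paper step for step.
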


\section{Telescopy for triangulated categories}\label{sec:triangulated}

Just as for stable $\infty$-categories, several notions of telescopy for
triangulated categories exist. The first is straightforward. A triangulated category $\Tscr$
satisfies the triangulated telescopy hypothesis ($\Tscr$TH) if every smashing localization
\begin{equation*}
    j:\Tscr\rightleftarrows\Tscr':j_{\rho}
\end{equation*}
has a kernel
generated by compact objects of $\Tscr$. Note that by definition every smashing localization
is a Bousfield localization, so it is determined by the kernel of $j$.

Now, suppose as in the introduction that $\Tscr$ and $\Uscr$ are compactly generated
triangulated categories with all coproducts, that $\Tscr$ is a $\otimes$-triangulated
category, and that there is a $\otimes$-product $\otimes:\Tscr\times\Uscr\rightarrow\Uscr$
that preserves coproducts in each variable separately.
Then $\Uscr$ satisfies the $\otimes$-telescope hypothesis under the action of $\Tscr$
($\otimes$TH) if
every smashing localization where the localizing subcategory is closed under the action of
$\Tscr$ is
generated by compact objects of $\Uscr$.

If $\Mscr$ is a compactly generated symmetric monoidal stable presentable $\infty$-category
and $\Cscr$ is $\Mscr$-linear, then $\Ho(\Mscr)$ and $\Ho(\Cscr)$ satisfy the hypotheses on
$\Tscr$ and $\Uscr$ above.

\begin{lemma}\label{lem:hotimes}
    The stable presentable $\infty$-category $\Cscr$ satisfies the $\Mscr$-linear telescope hypothesis if and only if the
    $\otimes$-telescope hypothesis holds for $\Ho(\Cscr)$ under the action of
    $\Ho(\Mscr)$.
    \begin{proof}
        Suppose that $\Ho(\Cscr)$ satisfies the $\otimes$-telescope hypothesis, and let
        $j:\Cscr\rightleftarrows\Dscr:j_\rho$ be an $\Mscr$-linear smashing localization.
        Then,
        \begin{equation*}
            \Ho(j):\Ho(\Cscr)\rightleftarrows\Ho(\Dscr):\Ho(j_\rho)
        \end{equation*}
        is a smashing localization. Moreover, the kernel of $\Ho(j)$ is
        $\Ho(\Mscr)$-closed,
        since $j$ is $\Mscr$-linear. Therefore, by the $\otimes$-telescope hypothesis for
        $\Ho(j)$, this kernel is generated by compact objects of $\Ho(\Cscr)$. It follows
        that the kernel of $j$ is generated by compact objects of $\Cscr$.

        Now, suppose that $\Cscr$ satisfies the $\Mscr$-linear telescope hypothesis, and let
        \begin{equation*}
            h:\Ho(\Cscr)\rightleftarrows\Tscr:h_\rho
        \end{equation*}
        be a smashing localization where $\ker(h)$ is $\Ho(\Mscr)$-closed. Let $\Kscr$ be the
        full subcategory of $\Cscr$ consisting of objects $x$ whose homotopy class in
        $\Ho(\Cscr)$ is contained in $\ker(h)$. By hypothesis, $\Kscr$ is closed under
        tensoring with objects of $\Mscr$. From the existence of the adjoint $h_\rho$, it
        follows that $\ker(h)$ is well-generated in the sense of triangulated categories
        (see for instance~\cite{krause-localization}). Hence, $\Kscr$ is a stable
        presentable $\infty$-category by~\cite{ha}*{Lemma 1.4.4.2}. It follows that $\Kscr$
        is also $\Mscr$-linear
        category (using this closure property and the fact that $\Mscr$ is itself). Since
        $\Kscr$ is presentable and $\Kscr\rightarrow\Cscr$ preserves coproducts, it follows
        that the inclusion has a right adjoint by the adjoint functor theorem for
        presentable $\infty$-categories~\cite{htt}*{Corollary 5.5.2.9}. Thus, there is a localization $\Cscr\rightarrow\Dscr$
        with kernel $\Kscr$, which can be identified with the map from $\Cscr$ to the
        cofiber of $\Kscr\rightarrow\Cscr$ in the $\infty$-category of stable presentable
        $\infty$-categories. By construction, $\Ho(\Dscr)\we\Tscr$, and the localization is
        smashing, since this can be checked at the level of homotopy categories. Thus,
        $\Kscr$ is generated by compact objects of $\Cscr$, and so $\ker(j)$ is
        generated by compact objects of $\Ho(\Cscr)$, as desired.
    \end{proof}
\end{lemma}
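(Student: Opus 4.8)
The plan is to prove the two implications separately, as this biconditional simply relates the $\infty$-categorical hypothesis to its triangulated shadow under $\Ho$.

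For the forward implication starting from $\otimes$TH, I would take an arbitrary $\Mscr$-linear smashing localization $j:\Cscr\rightleftarrows\Dscr:j_\rho$ and push everything down to homotopy categories. First I would check that $\Ho(j):\Ho(\Cscr)\rightleftarrows\Ho(\Dscr):\Ho(j_\rho)$ is again a smashing localization: since coproducts in a stable presentable $\infty$-category are already computed in the homotopy category, $\Ho(j_\rho)$ preserves coproducts and stays fully faithful. Next I would observe that $\ker(\Ho(j))$ is $\Ho(\Mscr)$-closed, which is immediate from the $\Mscr$-linearity of $j$. Applying the $\otimes$-telescope hypothesis then shows that this kernel is generated by compact objects of $\Ho(\Cscr)$; since the compact objects of $\Ho(\Cscr)$ are exactly the homotopy classes of compact objects of $\Cscr$, and since $\ker(j)$ matches $\ker(\Ho(j))$ object for object, I would conclude that $\ker(j)$ is generated by compact objects of $\Cscr$.

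For the converse---the substantive direction---I would assume LTH for $\Cscr$ and take a smashing localization $h:\Ho(\Cscr)\rightleftarrows\Tscr:h_\rho$ with $\ker(h)$ closed under the $\Ho(\Mscr)$-action. The goal is to realize this triangulated localization as the shadow of an $\infty$-categorical one. I would set $\Kscr\subseteq\Cscr$ to be the full subcategory of objects whose homotopy class lies in $\ker(h)$. The crucial step is to upgrade $\Kscr$ to a stable presentable $\infty$-category: the existence of $h_\rho$ forces $\ker(h)$ to be well-generated as a triangulated category, which is precisely the input to the presentability criterion (\cite{ha}*{Corollary 1.4.4.2}). The $\Ho(\Mscr)$-closure of $\ker(h)$ then upgrades to $\Mscr$-linearity of $\Kscr$. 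Since the inclusion $\Kscr\hookrightarrow\Cscr$ preserves colimits between presentable $\infty$-categories, the adjoint functor theorem (\cite{htt}*{Corollary 5.5.2.9}) supplies a right adjoint, and forming the cofiber in $\Prlst$ yields a localization $\Cscr\rightarrow\Dscr$ with kernel exactly $\Kscr$; smashing-ness can be read off at the homotopy level because $\Ho(\Dscr)\we\Tscr$. Finally, LTH applied to this localization shows $\Kscr$ is generated by compact objects of $\Cscr$, which descends to the assertion that $\ker(h)$ is generated by compact objects of $\Ho(\Cscr)$.

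The main obstacle is the lifting step in the converse: manufacturing a genuine stable presentable $\infty$-category $\Kscr$ out of the triangulated subcategory $\ker(h)$ and confirming that the resulting $\infty$-categorical localization remains smashing and $\Mscr$-linear. Everything hinges on the observation that the mere existence of the right adjoint $h_\rho$ guarantees well-generation of $\ker(h)$, without which the presentability criterion, and hence the adjoint functor theorem, would be unavailable. The remaining subtlety is bookkeeping---verifying that presentability, $\Mscr$-linearity, and the smashing property each transfer faithfully between the triangulated and $\infty$-categorical pictures---rather than any serious computation.
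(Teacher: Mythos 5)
Your proposal matches the paper's proof essentially step for step: the forward direction passes the $\Mscr$-linear smashing localization down to homotopy categories and applies the $\otimes$-telescope hypothesis, and the converse lifts $\ker(h)$ to the full subcategory $\Kscr\subseteq\Cscr$, uses well-generation (from the existence of $h_\rho$) together with the presentability criterion of \cite{ha}*{Section 1.4.4} to make $\Kscr$ stable presentable and $\Mscr$-linear, invokes the adjoint functor theorem and the cofiber construction to produce the localization $\Cscr\rightarrow\Dscr$ with $\Ho(\Dscr)\we\Tscr$, checks smashing-ness at the homotopy level, and then applies LTH. The argument and even the key citations coincide with the paper's, so there is nothing further to add.
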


\begin{lemma}\label{lem:hotel}
    A stable $\infty$-category $\Cscr$ satisfies the telescope
    hypothesis if and only if its homotopy category $\Ho(\Cscr)$ satisfies the triangulated telescope
    hypothesis.
    \begin{proof}
        This is left to the reader. It is straightforward using the techniques
        of~\cite{ha}*{Section 1.4.4} and similar to the proof of the previous lemma.
    \end{proof}
\end{lemma}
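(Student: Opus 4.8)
The plan is to follow the template of the previous lemma, Lemma~\ref{lem:hotimes}, discarding all monoidal bookkeeping; indeed the telescope hypothesis is the special case of the $\Mscr$-linear telescope hypothesis in which no closure condition is imposed on kernels, so the same two-sided argument should apply essentially verbatim with $\Mscr$-linearity deleted. I would prove both implications separately.

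For the forward implication, I would assume $\Ho(\Cscr)$ satisfies the triangulated telescope hypothesis and take a smashing localization $j:\Cscr\rightleftarrows\Dscr:j_\rho$. Passing to homotopy categories produces $\Ho(j):\Ho(\Cscr)\rightleftarrows\Ho(\Dscr):\Ho(j_\rho)$, and I would check this is again a smashing Bousfield localization: full faithfulness of $\Ho(j_\rho)$ and its preservation of coproducts are both detected on homotopy categories. The triangulated hypothesis then gives that $\ker(\Ho(j))$ is generated by compact objects of $\Ho(\Cscr)$. Because compact objects and localizing-subcategory generation both agree between a compactly generated stable $\infty$-category and its homotopy category, this immediately yields that $\ker(j)$ is generated by compact objects of $\Cscr$.

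For the converse, I would assume $\Cscr$ satisfies the telescope hypothesis and start from a smashing localization $h:\Ho(\Cscr)\rightleftarrows\Tscr:h_\rho$ of triangulated categories. The crucial move, precisely as in Lemma~\ref{lem:hotimes}, is to lift the kernel: let $\Kscr\subseteq\Cscr$ be the full subcategory on those objects whose homotopy class lies in $\ker(h)$. The existence of $h_\rho$ forces $\ker(h)$ to be well-generated, so by~\cite{ha}*{Lemma 1.4.4.2} the subcategory $\Kscr$ is a stable presentable $\infty$-category. Since the inclusion $\Kscr\rightarrow\Cscr$ preserves coproducts, the adjoint functor theorem~\cite{htt}*{Corollary 5.5.2.9} supplies a right adjoint, and the cofiber $\Cscr\rightarrow\Dscr$ of $\Kscr\rightarrow\Cscr$ in $\Prlst$ is then a localization with kernel $\Kscr$. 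This localization is smashing and has $\Ho(\Dscr)\we\Tscr$ recovering $h$, both of which can be verified on homotopy categories. Applying the telescope hypothesis for $\Cscr$ shows $\Kscr$ is generated by compact objects of $\Cscr$, so $\ker(h)$ is generated by compact objects of $\Ho(\Cscr)$.

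The step I expect to be the main obstacle is exactly this lifting: establishing that the full subcategory of $\Cscr$ sitting over the well-generated triangulated subcategory $\ker(h)$ is genuinely stable and presentable, and that the localization it determines has the prescribed homotopy category and is smashing. This is where one must invoke Lurie's recognition criterion for presentable stable $\infty$-categories, and it is the only place the argument uses more than formal transfer between the $\infty$-categorical and triangulated levels; everything concerning compactness and generation then passes across for free, since compact objects and generators are detected on $\Ho(\Cscr)$.
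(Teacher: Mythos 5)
Your proposal is correct and is exactly what the paper intends: the paper's own ``proof'' simply defers to the reader with the instruction to imitate the previous lemma using the techniques of \cite{ha}*{Section 1.4.4}, and your argument does precisely that, running both directions of Lemma~\ref{lem:hotimes} with the $\Mscr$-linearity deleted. Your identification of the kernel-lifting step (well-generatedness of $\ker(h)$ plus the presentability recognition criterion, then the adjoint functor theorem and the cofiber construction in $\Prlst$) as the only non-formal point matches the structure of the paper's argument for Lemma~\ref{lem:hotimes}.
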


In the Figure~\ref{fig:implications}, the implications are compiled between the various
telescope hypotheses. This paper is essentially about those on the first row. However, the
telescope conjecture originally arose in the setting of triangulated categories, so it is
useful to be able to go back and forth from that world to this one.
The most important conceptual arrow in the figure is the implication proved in the previous
section that $\textrm{LTH}$ is
equivalent to $\textrm{STH}$.

\begin{figure}[h]
    \centering
    \begin{equation*}
        \xymatrix{
            \textrm{TH} \ar@{<=>}[d] \ar@{=>}[r]  &   \textrm{LTH} \ar@{<=>}[d] \ar@{<=>}[r]   &   \textrm{STH}\\
            \Tscr\textrm{TH} \ar@{=>}[r]    &   \otimes\textrm{TH}  &
        }
    \end{equation*}
    \caption{The implications between various telescope hypotheses.}
    \label{fig:implications}
\end{figure}
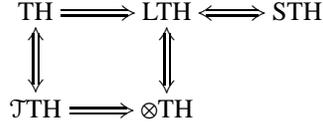

\begin{remark}
    In general it is difficult to lift constructions at the level of triangulated categories
    to the level of some model, be it a stable model category, a stable $\infty$-category,
    or a dg category. However, smashing localizations \emph{only} make sense in the presence
    of a Bousfield localization, and these are well enough behaved to be modeled. This is
    one reason why the classification of smashing localizations is easier than the
    classification of all localizing subcategories of a triangulated category.
\end{remark}

\section{The local-global principle}\label{sec:localglobal}

The next theorem is the main result of the paper. In the proof, note that if
$i:\Kscr\rightarrow\Mod_S^\alpha$ is a fully faithful inclusion of $S$-linear categories
with a right adjoint $i_\rho$ and if $\Kscr$ is generated by a set of objects that are
compact in $\Mod_S^\alpha$, then \emph{every} compact object of $\Kscr$ is compact when
viewed as an object of $\Mod_S^\alpha$.

\begin{theorem}\label{thm:localglobal}
    Let $X$ be a quasi-compact and quasi-separated derived scheme, and suppose that
    $\StMod^\alpha$ is a stack of linear categories on $X$. If
    there is an \'etale cover $f:U\rightarrow X$ such that $\Mod^\alpha_U$ satisfies the
    $\Mod_U$-linear
    telescope hypothesis, then $\Mod^\alpha_X$ satisfies the $\Mod_X$-linear telescope hypothesis.
    \begin{proof}
        By Theorem~\ref{thm:stacks}, it is enough to show that $\StMod^\alpha$ satisfies the
        stacky telescope hypothesis.  Let $$j:\StMod^\alpha\rightleftarrows\StMod^\beta:j_\rho$$ be a smashing localization of
        stacks, and consider the stack of kernels $\Kscr$; that is, $\Kscr_T=\ker(j_T)$ for
        $\Spec T\rightarrow X$. We must show that $\Kscr_T$ is generated by
        compact objects in $\Mod^\alpha_T$ for every $\Spec T\rightarrow X$.
        Fix a map $g:\Spec T\rightarrow X$, and consider the induced
        \'etale cover $f_T:U\times_X\Spec T\rightarrow\Spec T$ given by pulling back
        $f:U\rightarrow X$. As Theorem~\ref{thm:stacks} says that $\StMod^{f^*\alpha}$
        satisfies the stacky telescope hypothesis (over $U$ in this case), it follows
        immediately that $\StMod^{g_U^*\alpha}$ satisfies the stacky telescope hypothesis
        (over $U\times_X\Spec T$).
        In particular, by quasi-compactness and quasi-separatedness, there is an affine
        hypercover $\Spec S^\bullet\rightarrow\Spec T$ such that each $\Mod^\alpha_{S^k}$
        satisfies the $S^k$-linear telescope hypothesis. In other words, each $\Kscr_{S^k}$
        is compactly generated by objects of $\Mod^\alpha_{S^k}$.
        Since $\Kscr_T$ is a $T$-linear category with descent, the vertical arrows of
        the commutative diagram
        \begin{equation*}
            \xymatrix{
                \Kscr_T \ar[r] \ar[d]  &   \Mod^\alpha_T \ar[d]\\
                \lim_\Delta\Kscr_{S^\bullet} \ar[r]  &
                \lim_\Delta\Mod^\alpha_{S^\bullet}
            }
        \end{equation*}
        are equivalences. In particular, $\Kscr_T$ is \'etale locally compactly generated,
        so that it is compactly generated by Theorem~\ref{thm:lgcptg}. It suffices now to show
        that the inclusion functor $i:\Kscr_T\rightarrow\Mod^\alpha_T$ preserves compact
        objects. Let $x_T$ be a compact object of $\Kscr_T$. Each restriction
        $x_{S^k}$ is compact in $\Kscr_{S^k}$ by Corollary~\ref{cor:perfect}, which means that
        $i(x_{S^k})$ is compact in $\Mod^\alpha_{S^k}$ by hypothesis.
        It follows that $i(x)$ is perfect and hence compact, as desired.
        Therefore, $\Kscr_T$ is generated by compact objects of $\Mod_T^\alpha$.
    \end{proof}
\end{theorem}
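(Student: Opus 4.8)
The plan is to pass from the linear telescope hypothesis to the stacky telescope hypothesis using Theorem~\ref{thm:stacks}, since the stacky formulation is the one that can be checked one affine at a time and therefore plays well with \'etale descent. By Theorem~\ref{thm:stacks} it suffices to prove that $\StMod^\alpha$ satisfies the stacky telescope hypothesis. So I would fix a smashing localization of stacks $j:\StMod^\alpha\rightleftarrows\StMod^\beta:j_\rho$, form the stack of kernels $\Kscr$ with $\Kscr_T=\ker(j_T)$, and aim to show that for each $g:\Spec T\to X$ the category $\Kscr_T$ is generated by objects compact in $\Mod^\alpha_T$.

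First I would localize the problem over the given cover. Pulling back $f:U\to X$ along $g$ gives an \'etale cover $f_T:U\times_X\Spec T\to\Spec T$. The hypothesis that $\Mod^\alpha_U$ satisfies the $\Mod_U$-linear telescope hypothesis yields, through Theorem~\ref{thm:stacks}, that $\StMod^{f^*\alpha}$ satisfies the stacky telescope hypothesis over $U$; restricting along $g$ then shows $\StMod^{g_U^*\alpha}$ satisfies it over $U\times_X\Spec T$. Using that $\Spec T$ is quasi-compact and quasi-separated, I would refine this to an affine hypercover $\Spec S^\bullet\to\Spec T$ along which each $\Kscr_{S^k}$ is compactly generated by objects of $\Mod^\alpha_{S^k}$.

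Next I would glue. Because $\Kscr_T$ is a $T$-linear category with descent, the restriction maps identify $\Kscr_T\we\lim_\Delta\Kscr_{S^\bullet}$ compatibly with $\Mod^\alpha_T\we\lim_\Delta\Mod^\alpha_{S^\bullet}$. Thus $\Kscr_T$ is \'etale locally compactly generated, and Theorem~\ref{thm:lgcptg} upgrades this to global compact generation of $\Kscr_T$.

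The hard part will be the last step: showing that the inclusion $i:\Kscr_T\to\Mod^\alpha_T$ sends compact objects to compact objects. A priori an object compact in a kernel need not be compact in the ambient category, so compact generation of $\Kscr_T$ alone does not finish the argument. The bridge is the perfectness criterion. For $x_T\in\Kscr_T$ compact, Corollary~\ref{cor:perfect} makes each restriction $x_{S^k}$ compact in $\Kscr_{S^k}$; since each $\Kscr_{S^k}$ is generated by objects compact in $\Mod^\alpha_{S^k}$, the remark preceding the theorem forces $i(x_{S^k})$ to be compact in $\Mod^\alpha_{S^k}$. Hence $i(x_T)$ is perfect, and perfect objects are compact by the proposition, so $i(x_T)$ is compact in $\Mod^\alpha_T$. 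This shows $\Kscr_T$ is generated by objects compact in $\Mod^\alpha_T$, which is exactly the stacky telescope hypothesis, completing the proof via Theorem~\ref{thm:stacks}.
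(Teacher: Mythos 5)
Your proposal is correct and follows essentially the same route as the paper's own proof: reduction to the stacky telescope hypothesis via Theorem~\ref{thm:stacks}, localization along the pulled-back cover to an affine hypercover, gluing compact generation of the kernels via Theorem~\ref{thm:lgcptg}, and the perfectness argument (via Corollary~\ref{cor:perfect} and the remark preceding the theorem) to show the inclusion $i:\Kscr_T\rightarrow\Mod^\alpha_T$ preserves compact objects. In fact you make explicit the appeal to that remark, which the paper leaves slightly implicit in the phrase ``by hypothesis.''
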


\begin{corollary}\label{cor:az}
    If $X$ is a noetherian scheme and $\alpha\in\Br'(X)$, then $\Mod_X^\alpha$ satisfies the
    $\Mod_X$-linear telescope hypothesis.
    \begin{proof}
        In this case, one can take an \'etale cover $\coprod_i\Spec S_i\rightarrow X$ such
        that the restriction of $\alpha$ to each $\Spec S_i$ is trivial and such that $S_i$ is
        noetherian. The result of Hopkins and Neeman~\cite{neeman-chromatic} says that the
        telescope conjecture holds for $\Drm(S_i)$ and hence $\Mod_{S_i}$ by
        Lemma~\ref{lem:hotel}. In particular, $\Mod_{S_i}$ satisfies the $S_i$-linear
        telescope hypothesis. Thus, by the theorem, the $\Mod_X$-linear telescope hypothesis
        holds for $\Mod_X^\alpha$.
    \end{proof}
\end{corollary}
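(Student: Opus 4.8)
The plan is to deduce the statement from the local-global principle of Theorem~\ref{thm:localglobal} by arranging for the twist to disappear on an \'etale cover with noetherian affine pieces, and then feeding in the Hopkins--Neeman theorem as the affine input. First I would construct a suitable cover $f\colon U\rightarrow X$. Since $\alpha\in\Br'(X)$ is a cohomological Brauer class, it is split by some \'etale cover; because $X$ is noetherian (hence quasi-compact) I can refine this to a cover $U=\coprod_i\Spec S_i\rightarrow X$ in which each $S_i$ is noetherian, using that \'etale maps preserve the noetherian property and that $X$ has a finite cover by affine opens. On each $\Spec S_i$ the restriction of $\alpha$ is trivial, so $\Mod^\alpha_{S_i}\we\Mod_{S_i}$, the untwisted module category.

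Next I would verify the hypothesis of Theorem~\ref{thm:localglobal}, namely that $\Mod^\alpha_U$ satisfies the $\Mod_U$-linear telescope hypothesis. The classical Hopkins--Neeman theorem~\cite{neeman-chromatic} gives the triangulated telescope hypothesis for $\Drm(S_i)=\Ho(\Mod_{S_i})$; by Lemma~\ref{lem:hotel} this passes to the $\infty$-categorical telescope hypothesis for $\Mod_{S_i}$, and since the unit $S_i$ is a compact generator, Lemma~\ref{lem:unital} upgrades it to the $\Mod_{S_i}$-linear telescope hypothesis. Because the stacky telescope hypothesis is an affine-local condition on the cover, and because it is equivalent to the linear telescope hypothesis by Theorem~\ref{thm:stacks}, these pointwise statements assemble to the $\Mod_U$-linear telescope hypothesis for $\Mod^\alpha_U\we\Mod_U$; in particular the disjoint-union structure of $U$ causes no trouble. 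Theorem~\ref{thm:localglobal} then yields the $\Mod_X$-linear telescope hypothesis for $\Mod^\alpha_X$.

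Essentially all of the real work lives in Theorem~\ref{thm:localglobal} and in the chain of equivalences of Section~\ref{sec:telescopy}, so the only genuine step here is the first one. The main obstacle is therefore the construction of the cover: one must simultaneously split $\alpha$ \'etale-locally \emph{and} keep the affine pieces noetherian, which is where the hypotheses that $X$ is noetherian and $\alpha$ lies in $\Br'$ (so that \'etale-local triviality is available) are both used. Once such a cover exists, the rest of the argument is formal.
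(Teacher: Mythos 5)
Your proof is correct and takes essentially the same route as the paper's: split $\alpha$ on an \'etale cover by noetherian affines, push Hopkins--Neeman through Lemma~\ref{lem:hotel} (the passage to the $S_i$-linear hypothesis being the trivial direction of Lemma~\ref{lem:unital}) to get the hypothesis of Theorem~\ref{thm:localglobal}, and conclude. The details you make explicit --- refining the splitting cover to noetherian affine pieces and assembling the pointwise statements over the disjoint union via Theorem~\ref{thm:stacks} --- are precisely what the paper's terser proof leaves implicit.
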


It follows from the corollary that $\Drm_{\qc}(X,\alpha)$ satisfies the telescope hypothesis
for localizations whose kernel is closed under tensor product with complexes in
$\Drm_{\qc}(X)$.

\begin{corollary}
    If $X$ is a noetherian scheme, then $\Drm_{\qc}(X)$
    satisfies the $\otimes$-telescope hypothesis.
    \begin{proof}
        This follows from the previous corollary, with $\alpha=0$, and
        Lemma~\ref{lem:hotimes}.
    \end{proof}
\end{corollary}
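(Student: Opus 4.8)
The plan is to specialize the two immediately preceding results, so that essentially no new argument is required. First I would invoke Corollary~\ref{cor:az} in the case $\alpha=0$. Since the trivial Brauer class lies in $\Br'(X)$ for any noetherian scheme $X$, that corollary yields that $\Mod_X=\Mod_X^\Oscr$ satisfies the $\Mod_X$-linear telescope hypothesis. This step carries all of the arithmetic content: it is where the noetherian hypothesis and the Hopkins--Neeman input (repackaged through the local-global principle in Corollary~\ref{cor:az}) are consumed, and there is nothing further to prove at the level of $\infty$-categories.

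Second, I would translate this $\infty$-categorical statement into the triangulated language by applying Lemma~\ref{lem:hotimes} with $\Cscr=\Mscr=\Mod_X$. The relevant observation is that $\Mod_X$ is symmetric monoidal with $\Ho(\Mod_X)=\Drm_{\qc}(X)$, so $\Mod_X$ is tautologically a $\Mod_X$-linear category (it is the unit module over itself), and the symmetric monoidal structure descends to the usual $\otimes$-triangulated structure on $\Drm_{\qc}(X)$ together with its self-action. Lemma~\ref{lem:hotimes} then converts the $\Mod_X$-linear telescope hypothesis for $\Mod_X$ into the assertion that the $\otimes$-telescope hypothesis holds for $\Ho(\Mod_X)=\Drm_{\qc}(X)$ under the action of $\Ho(\Mod_X)=\Drm_{\qc}(X)$. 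By the convention of the introduction, this is precisely the statement that $\Drm_{\qc}(X)$ satisfies the $\otimes$-telescope hypothesis acting on itself, which is the claim.

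I do not anticipate any genuine obstacle here, since both ingredients are already in hand; the corollary is a direct deduction rather than an independent theorem. The only point that warrants a moment's care is checking that the self-action of $\Mod_X$ in $\Prl_{\st}$ induces exactly the expected $\otimes$-triangulated self-action of $\Drm_{\qc}(X)$ at the level of homotopy categories, so that the hypotheses of Lemma~\ref{lem:hotimes} are literally met; this is immediate from the symmetric monoidal structure on $\Mod_X$ and the compatibility of $\Ho(-)$ with the monoidal product.
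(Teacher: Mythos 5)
Your proposal is correct and follows exactly the paper's own proof: specialize Corollary~\ref{cor:az} to $\alpha=0$ to get the $\Mod_X$-linear telescope hypothesis for $\Mod_X$, then apply Lemma~\ref{lem:hotimes} with $\Cscr=\Mscr=\Mod_X$ to translate this into the $\otimes$-telescope hypothesis for $\Drm_{\qc}(X)$ acting on itself. The extra care you take about the self-action descending to the triangulated level is a reasonable elaboration of what the paper leaves implicit.
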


The second corollary was obtained previously, by~\cite{atjlss}, \cite{balmer-favi},
\cite{hovey-palmieri-strickland}, and~\cite{stevenson-support}. In flavor, the method used
here is most similar to that
of Balmer and Favi, although, as the first corollary demonstrates for $\alpha\neq 0$,
Theorem~\ref{thm:localglobal} has much broader consequences. In fact, the first corollary holds
even for $\alpha$ in the larger derived Brauer group of $X$ (see~\cite{toen-derived}). The
proof is no different. The power of our method is that we can use \'etale locality to check
for telescopy, rather than just Zariski local methods\footnote{Using the results of
To\"en~\cite{toen-derived}, these results can be extended to give an fppf local-global
principle for telescopy. However, without any applications in mind, this story is omitted.}.

As a third corollary, we obtain a classification result for the smashing
$\otimes$-localizations of $\Drm_{\qc}(X,\alpha)$.

\begin{corollary}
    Let $X$ be a quasi-compact and quasi-separated scheme, and let $\alpha\in\Br'(X)$.
    There is a bijection between the smashing $\otimes$-localizing subcategories of $\Drm_{\qc}(X,\alpha)$ under
    $\Drm_{\qc}(X)$ and the specialization closed subsets of $X$ that can be written as
    unions of closed subschemes of $X$ with quasi-compact complements.
    \begin{proof}
        To any smashing $\otimes$-localizing subcategory $\Drm$ of $\Drm_{\qc}(X,\alpha)$, we can
        associate the specialization closed subset of $X$ consisting of the union of all
        supports of all perfect complexes in $\Drm$. Since the support of any
        $\alpha$-twisted perfect complex is a closed subset with quasi-compact
        complement (see for instance~\cite{thomason-triangulated}),
        we obtain one direction of the correspondence. To get the other direction, we
        use Thomason's result~\cite{thomason-triangulated} that this is true when $\alpha=0$.
        It is known, for instance by To\"en~\cite{toen-derived}, that $\Drm_{\qc}(X,\alpha)$ is generated by a
        single $\alpha$-twisted perfect complex, say $E$. Let $V\subseteq X$ be a specialization closed
        subset, written as $V=\bigcup_{i\in I}V_i$, where $V_i\subseteq X$ is closed with
        quasi-compact complement. Then, for each $i$ there is a perfect complex $K_i$ in
        $\Drm_{\qc}(X)$ with support exactly $V_i$, and any such perfect complex generates
        $\Drm_{\qc,V_i}(X)$, the smashing subcategory of complexes supported on $V_i$. The
        collection of objects $K_i\otimes E$ generates a smashing localizing
        subcategory of $\Drm_{\qc}(X,\alpha)$ whose support is precisely $V$. It thus suffices to
        show that any two smashing $\otimes$-localizing subcategories of
        $\Drm_{\qc}(X,\alpha)$ supported on $V$ are equivalent. We can reduce to the case
        that $V=V_1$ is irreducible with quasi-compact complement. So, assume that $\Drm_1$
        and $\Drm_2$ are smashing localizing subcategories of $\Drm_{\qc}(X,\alpha)$ that
        are closed under tensoring with objects of $\Drm_{\qc}(X)$, and assume moreover that
        the supports of $\Drm_1$ and $\Drm_2$ are both identically $V$. The dual $E^\vee$ of
        $E$ is a $(-\alpha)$-twisted perfect complex. Note that the (derived) tensor product
        of an $\alpha$-twisted complex and a $\beta$-twisted complex is an
        $(\alpha+\beta)$-twisted complex. The
        $\otimes$-localizing subcategories generated by $\Drm_1\otimes E^\vee$ and
        $\Drm_2\otimes E^\vee$ in $\Drm_{\qc}(X)$ have support exactly $V$, and hence, by
        Thomason's result, coincide. It follows that the $\otimes$-closed localizing
        subcategories generated by $\Drm_1\otimes E^\vee\otimes E$ and $\Drm_1\otimes E^\vee\otimes
        E$ agree in $\Drm_{\qc}(X,\alpha)$. But, $E^\vee\otimes E$ is
        a perfect generator of $\Drm_{\qc}(X)$ (see~\cite{toen-derived}*{Definition 2.1}),
        so $\Drm_1\otimes E^\vee\otimes E$ generates $\Drm_1$,
        and similarly for $\Drm_2$. Hence, $\Drm_1=\Drm_2$.
    \end{proof}
\end{corollary}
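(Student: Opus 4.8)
The plan is to establish the bijection by defining the support map in one direction and an explicit generation construction in the other, with Thomason's classification of thick $\otimes$-subcategories in the untwisted setting serving as the backbone and an ``untwisting'' maneuver reducing the $\alpha$-twisted problem to the untwisted one. Notably, this argument does not rely on the telescope hypothesis itself, only on Thomason's classification together with To\"en's result that $\Drm_{\qc}(X,\alpha)$ admits a single $\alpha$-twisted perfect generator.

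First I would define the forward map. To a smashing $\otimes$-localizing subcategory $\Drm$ of $\Drm_{\qc}(X,\alpha)$, I assign the union of the supports of the $\alpha$-twisted perfect complexes it contains. Since each such support is a closed subset of $X$ with quasi-compact complement, the union is automatically specialization closed of the prescribed form, so the map is well defined. For surjectivity, given a specialization-closed set $V=\bigcup_{i\in I}V_i$ with each $V_i$ closed and of quasi-compact complement, I would fix an $\alpha$-twisted perfect generator $E$ and untwisted perfect complexes $K_i\in\Drm_{\qc}(X)$ with support exactly $V_i$. The objects $K_i\otimes E$ then generate a smashing $\otimes$-localizing subcategory whose support is exactly $V$, realizing $V$ in the image of the forward map.

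The heart of the argument is injectivity: any two smashing $\otimes$-localizing subcategories $\Drm_1,\Drm_2$ with the same support coincide. After reducing to $V$ irreducible, I would untwist by tensoring with the dual $E^\vee$, a $(-\alpha)$-twisted perfect complex, so that $\Drm_1\otimes E^\vee$ and $\Drm_2\otimes E^\vee$ become $\otimes$-localizing subcategories of $\Drm_{\qc}(X)$ with the same support; by Thomason's untwisted classification these agree. Retwisting by $E$ and invoking that $E^\vee\otimes E$ is a perfect generator of $\Drm_{\qc}(X)$ then recovers $\Drm_1$ and $\Drm_2$ from $\Drm_i\otimes E^\vee\otimes E$, forcing $\Drm_1=\Drm_2$.

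The main obstacle lies in this last step: one must confirm that tensoring a $\otimes$-closed localizing subcategory with the perfect generator $E^\vee\otimes E$ genuinely reproduces that subcategory, so that the equality of the untwisted subcategories propagates back faithfully through the twisting rather than only matching supports. The bookkeeping of twists, that the tensor of an $\alpha$-twisted and a $\beta$-twisted complex is $(\alpha+\beta)$-twisted, is what makes the round trip through $\Drm_{\qc}(X)$ legitimate, and keeping track of this carefully is the technical core of the proof.
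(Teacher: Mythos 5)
Your proposal follows essentially the same route as the paper's own proof: the identical support map, the identical surjectivity construction via the objects $K_i\otimes E$, and the identical injectivity argument that untwists with $E^\vee$, applies Thomason's untwisted classification, and retwists using the fact that $E^\vee\otimes E$ is a perfect generator of $\Drm_{\qc}(X)$. Even the technical point you flag at the end (that tensoring a $\otimes$-closed localizing subcategory with $E^\vee\otimes E$ recovers it) is exactly the step the paper relies on, so there is no substantive difference between the two arguments.
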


Now, we consider some examples.

\begin{example}\label{ex:dg}
    Consider a singular noetherian affine scheme $X=\Spec S$ with
    a non-zero class $\alpha\in\Hoh^1_{\et}(X,\ZZ)$ (in which case $X$ is not normal). For instance, one can take
    $S=k[x,y,z]/(y^2-x^3+x^2)$. By~\cite{toen-derived}, $\Mod_X^\alpha\we\Mod_A$
    for some derived Azumaya $S$-algebra $A$. By construction, $A$
    cannot be derived equivalent to an ordinary associative algebra, for otherwise
    $\alpha\in\Br(X)$. Nevertheless, the
    $S$-linear telescopy hypothesis holds for $\Mod_A$ by the theorem. It follows
    that the telescope hypothesis holds for $\Mod_A$ and hence that the
    triangulated telescope hypothesis holds for $\Drm(A)$. To our knowledge, this is the first
    example of any version of the telescope hypothesis for a truly derived dg algebra.
\end{example}

\begin{example}
    In~\cite{dwyer-palmieri}, Dwyer and Palmieri give an example of a non-noetherian scheme for
    which the telescope hypothesis holds, namely the truncated polynomial ring on infinitely
    many generators $$\Spec k[t_1,t_2,\ldots]/(t_i^{n_i})$$ where $n_i\geq 2$ for all $i$. The theorem says
    that for an Azumaya algebra over this ring, the telescope hypothesis holds. Any
    such Azumaya algebra is induced from a central simple algebra over $k$. But, this fact seems not
    to lead to an immediate proof of telescopy.
\end{example}

\section{Classifying stacks and gerbes}\label{sec:gerbes}

In this section, a proof is given of telescopy
for the derived category of gerbes and of classifying spaces of finite group schemes.
These cover two of the most important cases of Deligne-Mumford stacks. For
instance, the components of the moduli stack of semistable vector bundles on a smooth
projective surface are
abelian gerbes over noetherian schemes, so the results below apply.

\begin{theorem}\label{thm:classifying}
    Let $X$ be a noetherian scheme, and
    let $\Gscr\rightarrow X$ be a finite \'etale group scheme such that the fiber over every
    point $x\in X$ is of order prime to
    the characteristic of $k(x)$. Then, $\Mod_{\Brm\Gscr}$ satisfies the $\Mod_X$-linear telescopy hypothesis (and
    hence the $\Mod_{\Brm\Gscr}$-linear telescope hypothesis),
    where $\Brm\Gscr$ is the classifying stack of $\Gscr$ over $X$. 
    \begin{proof}
        Let
        \begin{equation*}
            \coprod_i\Spec S_i\rightarrow X
        \end{equation*}
        be an \'etale cover such that $\Gscr_{S_i}=\Gscr\times_X\Spec S_i$ is a constant
        finite group scheme. Then, the restriction of $\Mod_{\Brm\Gscr}$ to $\Spec S_i$ is
        $$\Mod_{\Brm\Gscr_S}\we\Mod_{S[G]}.$$ If $\overline{x}$ is a geometric point of
        $\Spec S$, then $k(\overline{x})[G]$ is a product of matrix algebras (since the order of $G$ is prime
        to the characteristic of $k(\overline{x})$). This product does not depend on
        the geometric point on the connected components of $X$. Therefore, using for example the arguments of~\cite{ag}*{Section 5.3}, it
        follows that $S[G]$ is \'etale locally a product of matrix algebras over central
        separable extensions of $S$.
        Since $S$ is noetherian, $\Mod_{S[G]}$ \'etale locally satisfies the
        linear telescope hypothesis by Corollary~\ref{cor:az}. But, this implies that $\Mod_{S[G]}$ satisfies the
        $S$-linear telescope hypothesis by Theorem~\ref{thm:localglobal}, and hence that
        $\Mod_{\Brm\Gscr}$ satisfies the $\Mod_X$-linear telescope hypothesis by the same
        theorem.
    \end{proof}
\end{theorem}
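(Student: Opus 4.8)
The plan is to verify the hypothesis of the local-global principle, Theorem~\ref{thm:localglobal}, by producing an étale cover over which $\Mod_{\Brm\Gscr}$ becomes a product of ordinary module categories over noetherian rings, where telescopy is already available through Corollary~\ref{cor:az}. Since $\Gscr$ is finite étale, I would first choose an étale cover $\coprod_i\Spec S_i\rightarrow X$ with each $S_i$ noetherian and with each pullback $\Gscr_{S_i}=\Gscr\times_X\Spec S_i$ isomorphic to the constant group scheme attached to a finite group $G$. Descent along the atlas $\Spec S_i\rightarrow\Brm\Gscr_{S_i}$ then identifies the restriction of $\Mod_{\Brm\Gscr}$ with $\Mod_{S_i[G]}$, the linear category of modules over the group algebra, so the task reduces to establishing the $\Mod_{S_i}$-linear telescope hypothesis for $\Mod_{S_i[G]}$.

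The next step is structural. For a geometric point $\overline{x}$ of $\Spec S_i$, the hypothesis that $\#G$ is prime to the characteristic of $k(\overline{x})$ makes Maschke's theorem apply, so $k(\overline{x})[G]$ is semisimple, hence a finite product of matrix algebras over $k(\overline{x})$. The number and sizes of these factors are locally constant, so they are constant on each connected component. I would use this, together with the separability of the group algebra, to spread the fibrewise Wedderburn decomposition out over the base: after refining to a finer étale cover $\coprod_j\Spec S_{ij}\rightarrow\Spec S_i$, with noetherian terms and fine enough to split the finite étale centers into copies of the base, $S_{ij}[G]$ becomes a finite product $\prod_a\Mat_{n_a}(S_{ij})$ of matrix algebras over $S_{ij}$ itself, along the lines of~\cite{ag}*{Section~5.3}.

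On such a cover, Morita invariance gives $\Mod_{S_{ij}[G]}\we\prod_a\Mod_{S_{ij}}$, a finite product of copies of $\Mod_{S_{ij}}$. Because $S_{ij}$ is noetherian, Corollary~\ref{cor:az} with trivial twist (equivalently the Hopkins--Neeman theorem through Lemma~\ref{lem:hotel}) shows each factor satisfies the $\Mod_{S_{ij}}$-linear telescope hypothesis, and this passes to finite products, since a linear smashing localization of the product is the product of linear smashing localizations of the factors. Hence $\Mod_{S_{ij}[G]}$ satisfies the $\Mod_{S_{ij}}$-linear telescope hypothesis, so by Theorem~\ref{thm:localglobal} applied to $\coprod_j\Spec S_{ij}\rightarrow\Spec S_i$ the category $\Mod_{S_i[G]}$ satisfies the $\Mod_{S_i}$-linear telescope hypothesis. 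A second application of Theorem~\ref{thm:localglobal}, now to $\coprod_i\Spec S_i\rightarrow X$, yields the $\Mod_X$-linear telescope hypothesis for $\Mod_{\Brm\Gscr}$. The parenthetical $\Mod_{\Brm\Gscr}$-linear statement then follows as in the introductory discussion, since $\Drm_{\qc}(\Brm\Gscr)$ is $\otimes$-triangulated and receives a $\otimes$-pullback from $\Drm_{\qc}(X)$.

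The main obstacle will be the spreading-out step, namely promoting the fibrewise semisimplicity of $k(\overline{x})[G]$ to an étale-local product decomposition of $S_{ij}[G]$ over the base. This is precisely where the tameness hypothesis is indispensable: it guarantees the separability of the group algebra, so that the Wedderburn components deform without obstruction and no nontrivial Brauer class appears. Everything else is a bookkeeping exercise in applying the local-global principle twice, once to collapse the group algebra to matrix algebras over the base and once to descend from the trivializing cover back to $X$.
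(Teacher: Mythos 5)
Your proposal is correct and follows essentially the same route as the paper: trivialize $\Gscr$ on an \'etale cover, identify the restriction with $\Mod_{S[G]}$, use Maschke's theorem at geometric points together with the spreading-out arguments of \cite{ag}*{Section 5.3} to obtain an \'etale-local Wedderburn decomposition, and then conclude by Corollary~\ref{cor:az} and (two applications of) Theorem~\ref{thm:localglobal}. The only cosmetic difference is that you refine the cover further to split the separable centers, reducing to finite products of copies of $\Mod_{S_{ij}}$ via Morita theory plus an explicit finite-product argument, whereas the paper stops at matrix algebras over central separable extensions of $S$ and cites Corollary~\ref{cor:az} directly.
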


Recall that a tame Deligne-Mumford stack is one whose stabilizer groups have order prime to
the residue characteristics. The classifying stacks appearing in theorem are examples.

\begin{corollary}
    Let $\Xscr$ be a separated noetherian tame Deligne-Mumford stack
    whose stabilizers groups are locally constant, and assume that the coarse moduli space
    $X$ of $\Xscr$ is a noetherian scheme.
    Then $\Mod_\Xscr$ satisfies the $\Mod_X$-linear telescope hypothesis (and
    hence the $\Mod_\Xscr$-linear telescope hypothesis).
    \begin{proof}
        In this case, $\Xscr\rightarrow X$ is \'etale locally of the form $[\Spec
        T/G]\rightarrow\Spec S$, where $G$ is a finite group acting on $\Spec T$ with
        constant stabilizer $H$, by~\cite{abramovich-olsson-vistoli}*{Theorem 3.2}. It follows that $[\Spec T/G]$ is equivalent to the
        classifying stack of $H$ over $\Spec T^G$. But, $\Spec T^G$ is also noetherian, by
        hypothesis, so that the corollary follows from the previous theorem.
    \end{proof}
\end{corollary}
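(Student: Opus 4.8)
The plan is to reduce the statement to the previous theorem, Theorem~\ref{thm:classifying}, by showing that a separated tame Deligne-Mumford stack with locally constant stabilizers is \'etale locally a classifying stack of a finite group scheme over a noetherian base. The key structural input is the local structure theorem for tame Deligne-Mumford (indeed tame Artin) stacks. First I would invoke~\cite{abramovich-olsson-vistoli}*{Theorem 3.2}, which says that \'etale locally on the coarse moduli space $X$, the stack $\Xscr$ has the form $[\Spec T/G]\rightarrow\Spec S$, where $G$ is a finite group acting on an affine scheme $\Spec T$ and $\Spec S = \Spec T^G$ is the corresponding piece of the coarse space. The tameness hypothesis guarantees that the orders of the stabilizers are prime to the residue characteristics, which is exactly what makes the quotient well-behaved and connects to the representation-theoretic hypotheses of the previous theorem.

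Next I would use the locally-constant-stabilizer hypothesis to conclude that, after possibly refining the cover, the action of $G$ on $\Spec T$ has constant stabilizer $H$. Geometrically, when all the stabilizers coincide, the quotient $[\Spec T/G]$ becomes a gerbe banded by $H$ over the coarse space; more precisely, it is equivalent to the classifying stack $\Brm H$ of the finite constant group $H$ over $\Spec T^G$. This is the crucial identification that puts us in the setting of Theorem~\ref{thm:classifying}. The point is that once the stabilizer is a single constant group $H$ sitting over the geometric quotient, the free part of the action has been fully quotiented away, leaving only the inertia, so the stack is a $\Brm H$-gerbe that is in fact (\'etale locally) split.

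Finally I would verify the noetherian hypotheses so that Theorem~\ref{thm:classifying} applies directly. Since $\Spec T^G$ is a localization-and-invariants construction over the noetherian coarse space $X$ which is assumed noetherian, $\Spec T^G$ is again noetherian. The finite group $H$, viewed as a finite \'etale (constant) group scheme over $\Spec T^G$, has order prime to the residue characteristics precisely because $\Xscr$ is tame. Therefore the previous theorem applies to $\Brm H$ over $\Spec T^G$ and shows that $\Mod_{\Brm H}$ satisfies the $\Mod_{\Spec T^G}$-linear telescope hypothesis. Pulling this back along the \'etale cover $\coprod_i \Spec T_i^{G_i}\rightarrow X$, we learn that $\Mod_\Xscr$ is \'etale locally equipped with the linear telescope hypothesis, and then Theorem~\ref{thm:localglobal} upgrades this to the global statement that $\Mod_\Xscr$ satisfies the $\Mod_X$-linear telescope hypothesis.

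The main obstacle I anticipate is the passage from the abstract quotient presentation $[\Spec T/G]$ with constant stabilizer $H$ to the concrete identification with $\Brm H$ over $\Spec T^G$. One must check that the residual gerbe structure is genuinely that of a classifying stack and that the cover one obtains is honestly \'etale over $X$ with noetherian pieces; the tameness is what ensures there are no wild phenomena obstructing this splitting. Granting the local structure theorem and this identification, the remainder is a formal application of the two preceding theorems.
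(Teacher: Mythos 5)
Your proposal follows essentially the same route as the paper's proof: invoke \cite{abramovich-olsson-vistoli}*{Theorem 3.2} to present $\Xscr$ \'etale locally over $X$ as $[\Spec T/G]$ with constant stabilizer $H$, identify this quotient with the classifying stack of $H$ over the noetherian scheme $\Spec T^G$, and then conclude via Theorem~\ref{thm:classifying} combined with the local-global principle of Theorem~\ref{thm:localglobal}. The only difference is one of explicitness: you spell out the globalization step and the noetherian/tameness verifications that the paper compresses into ``the corollary follows from the previous theorem.''
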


If more was known about the derived categories of algebraic spaces, then the assumption on
the coarse moduli space could possibly be dropped in the corollary. In particular, we are led to ask
the following question.

\begin{question}
    Does the $\Mod_X$-linear telescope hypothesis hold for the derived category of a noetherian algebraic
    space $X$?
\end{question}

In the non-tame case, it is still possible to say something, at least when the stabilizers
are abelian. Indeed, in that case, the group algebras $R[G]$ are in fact commutative and
noetherian, whence telescopy follows from Neeman's result. This is summarized in the next
proposition, which extends Theorem~\ref{thm:classifying}. There are analogs of the
corollaries as well, although we will leave their formulation to the reader.

\begin{proposition}\label{prop:abelian}
    Let $\Ascr$ be a finite \'etale abelian group scheme over a noetherian scheme $X$. Then, 
    $\Mod_{\Brm\Ascr}$ satisfies the $\Mod_X$-linear telescope hypothesis (and hence the
    $\Mod_{\Brm\Ascr}$-linear telescope hypothesis.
    \begin{proof}
        Indeed, \'etale locally on $X$, $\Ascr$ is a constant abelian group. If $\Spec
        S\rightarrow X$ is a map where $\Ascr_S$ is the constant abelian group scheme $A$, then
        $\Mod_{\Brm\Ascr_S}\we\Mod_{S[A]}$. But, $S[A]$ is a commutative noetherian ring, so that
        the $S$-linear telescope hypothesis holds for $\Mod_{S[A]}$. The rest of the
        proof follows now familiar lines.
    \end{proof}
\end{proposition}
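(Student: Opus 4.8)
The plan is to reproduce the strategy of Theorem~\ref{thm:classifying}, routing everything through the local-global principle of Theorem~\ref{thm:localglobal}; the decisive simplification in the present case is that the abelian hypothesis forces the relevant group algebras to be \emph{commutative}, so that Neeman's theorem applies directly rather than only after the detour through matrix algebras over central separable extensions. Concretely, I regard $\Mod_{\Brm\Ascr}$ as the global sections $\Mod^\alpha_X$ of the stack of linear categories $\StMod^\alpha$ on $X$ given on affines by $\Spec S\mapsto\Mod_{\Brm\Ascr_S}$, so that the telescope statement to be proved is the $\Mod_X$-linear telescope hypothesis for $\Mod^\alpha_X$.

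First I would choose an \'etale cover $U=\coprod_i\Spec S_i\rightarrow X$ trivializing $\Ascr$, so that each pullback $\Ascr_{S_i}$ is a constant finite abelian group $A$; since $X$ is noetherian it is quasi-compact, so finitely many $S_i$ suffice and each may be taken noetherian. Over such an $\Spec S=\Spec S_i$ the restriction of the stack is identified with $\Mod_{S[A]}$ via $\Mod_{\Brm\Ascr_S}\we\Mod_{S[A]}$, exactly as in the group-scheme case. The key observation is then that, because $A$ is abelian, the group algebra $S[A]$ is commutative, and because it is finite free over the noetherian ring $S$ it is itself noetherian. Hence by Hopkins--Neeman~\cite{neeman-chromatic} the telescope conjecture holds for $\Drm(S[A])$, so by Lemma~\ref{lem:hotel} the stable \icat $\Mod_{S[A]}$ satisfies the (plain) telescope hypothesis.

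Since plain telescopy is the strongest of the hypotheses---this is the implication $\mathrm{TH}\Rightarrow\mathrm{LTH}$ of Figure~\ref{fig:implications}, because every $\Mod_S$-linear smashing localization is in particular a smashing localization---it follows a fortiori that $\Mod_{S[A]}$ satisfies the $\Mod_S$-linear telescope hypothesis. Running this over the finitely many factors, I conclude that on the cover $U$ the global sections $\Mod^\alpha_U$ satisfy the $\Mod_U$-linear telescope hypothesis (the product over $i$ causes no trouble, as kernels of localizations decompose factorwise). Feeding this into Theorem~\ref{thm:localglobal} along the \'etale cover $U\rightarrow X$ then yields the $\Mod_X$-linear telescope hypothesis for $\Mod^\alpha_X=\Mod_{\Brm\Ascr}$.

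The parenthetical assertion, that $\Mod_{\Brm\Ascr}$ also satisfies the $\Mod_{\Brm\Ascr}$-linear telescope hypothesis, follows formally: for abelian $\Ascr$ the category $\Mod_{\Brm\Ascr}$ is symmetric monoidal and carries a monoidal pullback $\Mod_X\rightarrow\Mod_{\Brm\Ascr}$, so any $\Mod_{\Brm\Ascr}$-linear smashing localization is in particular $\Mod_X$-linear, and the broader $\Mod_X$-linear hypothesis just proved implies the narrower one. I expect no genuine obstacle in this argument: unlike the tame nonabelian case of Theorem~\ref{thm:classifying}, the abelian hypothesis makes the local group algebras commutative and noetherian, which is precisely the input Neeman's result requires. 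The only point that demands care is keeping straight the direction of the implications among the telescope hypotheses in Figure~\ref{fig:implications}---in particular that one needs the $\Mod_S$-linear (not merely $\Mod_{S[A]}$-linear) hypothesis on the cover, which is exactly what the implication $\mathrm{TH}\Rightarrow\mathrm{LTH}$ supplies.
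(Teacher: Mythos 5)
Your proposal is correct and follows essentially the same route as the paper: trivialize $\Ascr$ on an \'etale cover, identify the local sections with $\Mod_{S[A]}$, observe that $S[A]$ is commutative noetherian so that Hopkins--Neeman plus Lemma~\ref{lem:hotel} and the implication $\mathrm{TH}\Rightarrow\mathrm{LTH}$ give the $\Mod_S$-linear hypothesis locally, and then apply Theorem~\ref{thm:localglobal}. The paper compresses these steps into ``the rest of the proof follows now familiar lines,'' and your handling of the parenthetical claim via the monoidal pullback $\Mod_X\rightarrow\Mod_{\Brm\Ascr}$ matches the paper's remark in the introduction.
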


\begin{corollary}
    If $X$ is a noetherian scheme and $\Xscr\rightarrow X$ is a finite abelian gerbe, then $\Mod_\Xscr$ satisfies the
    $\Mod_X$-linear telescope hypothesis (and hence the $\Mod_\Xscr$-linear telescope
    hypothesis).
    \begin{proof}
        In this case, $\Xscr\rightarrow X$ is \'etale locally on $X$ the classifying stack
        of a finite \'etale abelian group scheme. The corollary follows from the application of
        Proposition~\ref{prop:abelian} followed by Theorem~\ref{thm:localglobal}.
    \end{proof}
\end{corollary}

\begin{example}
    Suppose that $X$ is a smooth projective surface over a field, and that $\Mscr$
    is the moduli stack of geometrically stable vector bundles on $X$ of rank $r$, determinant $L$, and second
    Chern class $c\in\ZZ$. Then, the $\Mod_M$-linear telescope hypothesis holds for
    $\Mod_\Mscr$, where $M$ is the coarse moduli space of $\Mscr$. In fact, when $\Xscr\rightarrow X$ is a $\mu_n$-gerbe, this is true for
    $\Mscr_\Xscr$ as well, where $\Mscr_\Xscr$ is the moduli stack of geometrically stable $\Xscr$-twisted vector
    bundles of rank $r$, determinant $L$, and second Chern class $c$. For details on these
    stacks, see~\cite{lieblich-moduli}.
\end{example}

\begin{example}
    For a final example, let $\Xscr$ be a smooth Deligne-Mumford stack over $\CC$ of
    dimension at most $3$, with coarse moduli space a noetherian scheme $X$. Assume also
    that the canonical bundle of $\Xscr$ is trivial.
    This is precisely the situation in
    which the Bridgeland-King-Reid theorem~\cite{bridgeland-king-reid} holds. Thus, 
    the coarse moduli space $X$ has a crepant resolution, say $V\rightarrow X$, and there is a derived
    equivalence $\Mod_V\we\Mod_{\Xscr}$. The equivalence turns $\Mod_{\Xscr}$ into a
    $\Mod_V$-linear category, and since $V$ is a noetherian scheme, it follows from the
    previous section that $\Mod_{\Xscr}$ satisfies the $\Mod_V$-linear telescope hypothesis.
\end{example}

Given the numerous positive results in this section, the next question is rather natural.

\begin{question}
    Does the $\Mod_{\Xscr}$-linear telescope hypothesis hold for $\Mod_{\Xscr}$ when $\Xscr$
    is a noetherian Deligne-Mumford stack?
\end{question}

Another positive answer is provided by Dell'Ambrogio and
Stevenson~\cite{dellambrogio-stevenson}, who establish the linear telescope hypothesis for
the derived categories of weighted projective stacks.

The question is especially important when $\Xscr$ has a coarse moduli scheme $X$. If
moreover $\Xscr$ is smooth, a
recent paper of Dubey and Mallick~\cite{dubey-mallick} together with a positive answer to
the question would produce a classification of all $\otimes$-closed smashing localizations of
$\Drm_{\qc}(\Xscr)$: they would be in bijection with specialization closed subsets of $X$.
In particular, the theorems and statements of this section all lead to classification
theorems. We end with one example of such a classification theorem.

\begin{theorem}
    Let $X$ be a smooth scheme of finite type over a field $k$, and let $\Gscr\rightarrow X$
    be as in Theorem~\ref{thm:classifying}. Suppose that $\Xscr\rightarrow X$ is a
    $\Gscr$-gerbe (a stack over $X$ \'etale locally equivalent to $\Brm\Gscr$). Then, there is a
    bijection between the set of $\otimes$-closed smashing localizations of
    $\Drm_{\qc}(\Xscr)$ and the specialization closed subsets of $X$.
    \begin{proof}
        The coarse moduli space of $\Xscr$ is $X$, so
        by~\cite{dubey-mallick} there is an isomorphism
        $\mathrm{Spc}\,\Drm_{\perf}(X)\iso\mathrm{Spc}\,\Drm_{\perf}(\Xscr)$, where
        $\mathrm{Spc}$ denotes the spectrum of Balmer~\cite{balmer}. This means
        that there is a bijection between the thick $\otimes$-ideals in these two
        $\otimes$-triangulated categories. The result follows since, by
        Theorem~\ref{thm:classifying}, any $\otimes$-smashing localization is generated by
        its intersection with $\Drm_{\perf}(\Xscr)$ and from Thomason's classification
        of the thick $\otimes$-ideals of $\Drm_{\perf}(X)$~\cite{thomason-triangulated}.
    \end{proof}
\end{theorem}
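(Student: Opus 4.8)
The plan is to realize the bijection as a composite of three correspondences, the substantive inputs being the telescope hypothesis for $\Mod_\Xscr$ and the Balmer-spectrum comparison of Dubey and Mallick. First I would establish that $\Mod_\Xscr$ satisfies the $\Mod_X$-linear telescope hypothesis. Since $\Xscr\rightarrow X$ is a $\Gscr$-gerbe, it is \'etale locally equivalent to $\Brm\Gscr$; as $\Gscr$ satisfies the hypotheses of Theorem~\ref{thm:classifying}, the category $\Mod_{\Brm\Gscr}$ satisfies the telescope hypothesis, and the local-global principle of Theorem~\ref{thm:localglobal} then propagates this to $\Mod_\Xscr$. Passing to homotopy categories, this says that every $\otimes$-closed smashing localization of $\Drm_{\qc}(\Xscr)$ has kernel generated by its compact objects, which are precisely the perfect complexes.

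With telescopy available, the first correspondence is the standard one. A $\otimes$-closed smashing localization of $\Drm_{\qc}(\Xscr)$ is determined by its kernel, and by the telescope hypothesis this kernel is generated by the thick $\otimes$-ideal it carves out of $\Drm_{\perf}(\Xscr)$. Conversely, any thick $\otimes$-ideal of $\Drm_{\perf}(\Xscr)$ generates a smashing localizing subcategory. This sets up a bijection between $\otimes$-closed smashing localizations of $\Drm_{\qc}(\Xscr)$ and thick $\otimes$-ideals of $\Drm_{\perf}(\Xscr)$; the crucial point, supplied by Theorem~\ref{thm:classifying}, is that each such localization is recovered from its intersection with $\Drm_{\perf}(\Xscr)$.

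The remaining two correspondences are formal invocations of known classifications. Dubey and Mallick~\cite{dubey-mallick} produce an isomorphism of Balmer spectra $\mathrm{Spc}\,\Drm_{\perf}(X)\iso\mathrm{Spc}\,\Drm_{\perf}(\Xscr)$, using that $X$ is smooth of finite type and is the coarse moduli space of the tame gerbe $\Xscr$. By Balmer's theory~\cite{balmer}, thick $\otimes$-ideals of a rigid tensor-triangulated category correspond to the Thomason-closed subsets of its spectrum, so this homeomorphism gives a bijection between the thick $\otimes$-ideals of $\Drm_{\perf}(\Xscr)$ and those of $\Drm_{\perf}(X)$. Finally, Thomason's classification~\cite{thomason-triangulated} identifies the latter with the specialization closed subsets of $X$; since $X$ is noetherian, every such subset is a Thomason subset, so no further constraints arise, and composing the three bijections yields the theorem. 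The main obstacle is the Dubey-Mallick input: the isomorphism of Balmer spectra is where the geometric hypotheses of smoothness and tameness do their work, and everything else is a routine application of telescopy together with the Balmer-Thomason dictionary.
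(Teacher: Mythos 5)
Your proposal follows essentially the same route as the paper's proof: telescopy reduces $\otimes$-closed smashing localizations of $\Drm_{\qc}(\Xscr)$ to thick $\otimes$-ideals of $\Drm_{\perf}(\Xscr)$, the Dubey--Mallick isomorphism of Balmer spectra transports these to thick $\otimes$-ideals of $\Drm_{\perf}(X)$, and Thomason's classification identifies those with the specialization closed subsets of $X$. If anything, your first paragraph is slightly more careful than the paper, which invokes Theorem~\ref{thm:classifying} directly for $\Xscr$, whereas you make explicit the (necessary) intermediate step of propagating telescopy from $\Brm\Gscr$ to the gerbe $\Xscr$ via the local-global principle of Theorem~\ref{thm:localglobal}.
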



%
%
%
%
%

\begin{bibdiv}
\begin{biblist}

\bib{abramovich-olsson-vistoli}{article}{
    author={Abramovich, Dan},
    author={Olsson, Martin},
    author={Vistoli, Angelo},
    title={Tame stacks in positive characteristic},
    journal={Ann. Inst. Fourier (Grenoble)},
    volume={58},
    date={2008},
    number={4},
    pages={1057--1091},
    issn={0373-0956},
}
%

\bib{atjlss}{article}{
    author={Alonso Tarr{\'{\i}}o, Leovigildo},
    author={Jerem{\'{\i}}as L{\'o}pez, Ana},
    author={Souto Salorio, Mar{\'{\i}}a Jos{\'e}},
    title={Bousfield localization on formal schemes},
    journal={J. Algebra},
    volume={278},
    date={2004},
    number={2},
    pages={585--610},
    issn={0021-8693},
}
    


\bib{ag}{article}{
    author = {Antieau, Benjamin},
    author = {Gepner, David},
    title = {Brauer groups and \'etale cohomology in derived algebraic geometry},
    journal = {to appear in Geom. Top.},
    eprint = {http://arxiv.org/abs/1210.0290},
}

\bib{balmer}{article}{
    author={Balmer, Paul},
    title={The spectrum of prime ideals in tensor triangulated categories},
    journal={J. Reine Angew. Math.},
    volume={588},
    date={2005},
    pages={149--168},
    issn={0075-4102},
}

\bib{balmer-favi}{article}{
    author={Balmer, Paul},
    author={Favi, Giordano},
    title={Generalized tensor idempotents and the telescope conjecture},
    journal={Proc. Lond. Math. Soc. (3)},
    volume={102},
    date={2011},
    number={6},
    pages={1161--1185},
    issn={0024-6115},
}



\bib{benson-iyengar-krause}{article}{
    author={Benson, David J.},
    author={Iyengar, Srikanth B.},
    author={Krause, Henning},
    title={Stratifying modular representations of finite groups},
    journal={Ann. of Math. (2)},
    volume={174},
    date={2011},
    number={3},
    pages={1643--1684},
    issn={0003-486X},
}




\bib{bgt1}{article}{
    author={Blumberg, Andrew J.},
    author={Gepner, David},
    author={Tabuada, Gon{\c{c}}alo},
    title={A universal characterization of higher algebraic $K$-theory},
    journal={Geom. Topol.},
    volume={17},
    date={2013},
    number={2},
    pages={733--838},
    issn={1465-3060},
}
 

\bib{bokstedt-neeman}{article}{
    author={B{\"o}kstedt, Marcel},
    author={Neeman, Amnon},
    title={Homotopy limits in triangulated categories},
    journal={Compos. Math.},
    volume={86},
    date={1993},
    number={2},
    pages={209--234},
    issn={0010-437X},
}


\bib{bondal-vandenbergh}{article}{
    author={Bondal, A.},
    author={van den Bergh, M.},
    title={Generators and representability of functors in commutative and noncommutative geometry},
    journal={Mosc. Math. J.},
    volume={3},
    date={2003},
    number={1},
    pages={1--36, 258},
    issn={1609-3321},
}

\bib{bousfield-localization}{article}{
    author={Bousfield, A. K.},
    title={The localization of spectra with respect to homology},
    journal={Topology},
    volume={18},
    date={1979},
    number={4},
    pages={257--281},
    issn={0040-9383},
}

\bib{bridgeland-king-reid}{article}{
    author={Bridgeland, Tom},
    author={King, Alastair},
    author={Reid, Miles},
    title={The McKay correspondence as an equivalence of derived categories},
    journal={J. Amer. Math. Soc.},
    volume={14},
    date={2001},
    number={3},
    pages={535--554},
    issn={0894-0347},
}
 

\bib{bruning}{article}{
    author={Br{\"u}ning, Kristian},
    title={Thick subcategories of the derived category of a hereditary
    algebra},
    journal={Homology, Homotopy Appl.},
    volume={9},
    date={2007},
    number={2},
    pages={165--176},
    issn={1532-0073},
}




\bib{dellambrogio-stevenson}{article}{
    author={Dell'Ambrogio, Ivo},
    author={Stevenson, Greg},
    title={On the derived category of a graded commutative Noetherian ring},
    journal={J. Algebra},
    volume={373},
    date={2013},
    pages={356--376},
    issn={0021-8693},
}





\bib{dubey-mallick}{article}{
    author={Dubey, Umesh V.},
    author={Mallick, Vivek M.},
    title={Spectrum of some triangulated categories},
    journal={J. Algebra},
    volume={364},
    date={2012},
    pages={90--118},
    issn={0021-8693},
}


\bib{dwyer-palmieri}{article}{
    author={Dwyer, W. G.},
    author={Palmieri, J. H.},
    title={The Bousfield lattice for truncated polynomial algebras},
    journal={Homology Homotopy Appl.},
    volume={10},
    date={2008},
    number={1},
    pages={413--436},
    issn={1532-0073},
}

\bib{hovey-palmieri-strickland}{article}{
    author={Hovey, Mark},
    author={Palmieri, John H.},
    author={Strickland, Neil P.},
    title={Axiomatic stable homotopy theory},
    journal={Mem. Amer. Math. Soc.},
    volume={128},
    date={1997},
    number={610},
    pages={x+114},
    issn={0065-9266},
}

\bib{keller-smashing}{article}{
    author={Keller, Bernhard},
    title={A remark on the generalized smashing conjecture},
    journal={Manuscripta Math.},
    volume={84},
    date={1994},
    number={2},
    pages={193--198},
    issn={0025-2611},
}


\bib{krause-localization}{article}{
    author={Krause, Henning},
    title={Localization theory for triangulated categories},
    conference={
        title={Triangulated categories},
    },
    book={
        series={London Math. Soc. Lecture Note Ser.},
        volume={375},
        publisher={Cambridge Univ. Press},
        place={Cambridge},
    },
    date={2010},
    pages={161--235},
}

\bib{krause-stovicek}{article}{
    author={Krause, Henning},
    author={\v{S}\v{t}ov{\'{\i}}{\v{c}}ek, Jan},
    title={The telescope conjecture for hereditary rings via Ext-orthogonal
    pairs},
    journal={Adv. Math.},
    volume={225},
    date={2010},
    number={5},
    pages={2341--2364},
    issn={0001-8708},
}

\bib{lieblich-moduli}{article}{
    author={Lieblich, Max},
    title={Moduli of twisted sheaves},
    journal={Duke Math. J.},
    volume={138},
    date={2007},
    number={1},
    pages={23--118},
    issn={0012-7094},
}



\bib{htt}{book}{
      author={Lurie, Jacob},
       title={Higher topos theory},
      series={Annals of Mathematics Studies},
   publisher={Princeton University Press},
     address={Princeton, NJ},
        date={2009},
      volume={170},
        ISBN={978-0-691-14049-0; 0-691-14049-9},
}



\bib{dag11}{article}{
    author={Lurie, Jacob},
    title={Derived algebraic geometry XI: descent theorems},
    date={2011},
    eprint={http://www.math.harvard.edu/~lurie/},
}


\bib{ha}{article}{
    author={Lurie, Jacob},
    title={Higher algebra},
    date={2012},
    eprint={http://www.math.harvard.edu/~lurie/},
}


\bib{neeman-chromatic}{article}{
    author={Neeman, Amnon},
    title={The chromatic tower for $D(R)$},
    note={With an appendix by Marcel B\"okstedt},
    journal={Topology},
    volume={31},
    date={1992},
    number={3},
    pages={519--532},
    issn={0040-9383},
}

\bib{neeman-1992}{article}{
    author={Neeman, Amnon},
    title={The connection between the $K$-theory localization theorem of
    Thomason, Trobaugh and Yao and the smashing subcategories of Bousfield
    and Ravenel},
    journal={Ann. Sci. \'Ecole Norm. Sup. (4)},
    volume={25},
    date={1992},
    number={5},
    pages={547--566},
    issn={0012-9593},
}

\bib{ravenel-localization}{article}{
    author={Ravenel, Douglas C.},
    title={Localization with respect to certain periodic homology theories},
    journal={Amer. J. Math.},
    volume={106},
    date={1984},
    number={2},
    pages={351--414},
    issn={0002-9327},
}

\bib{stevenson-support}{article}{
    author = {Stevenson, Greg},
    title  =  {Support theory via actions of tensor triangulated categories},
    journal = {J. Reine Agnew. Math.},
    volume = {681},
    year = {2013},
    pages = {219--254},
}

\bib{stevenson-singularity}{article}{
    author = {Stevenson, Greg},
    title  =  {Subcategories of singularity categories via tensor actions},
    journal = {to appear in Compos. Math.},
    eprint = {http://arxiv.org/abs/1105.4698},
}

\bib{stevenson-flat}{article}{
    author = {Stevenson, Greg},
    title  =  {Derived categories of absolutely flat rings},
    journal = {ArXiv e-prints},
    eprint = {http://arxiv.org/abs/1210.0399},
    year = {2012},
}

 
 
\bib{thomason-triangulated}{article}{
    author={Thomason, R. W.},
    title={The classification of triangulated subcategories},
    journal={Compos. Math.},
    volume={105},
    date={1997},
    number={1},
    pages={1--27},
    issn={0010-437X},
}
 



\bib{toen-derived}{article}{
    author = {To{\"e}n, Bertrand},
    title = {Derived Azumaya algebras and generators for twisted derived categories},
    journal = {Invent. Math.},
    year = {2012},
    volume = {189},
    number = {3},
    pages = {581--652},
}

\end{biblist}
\end{bibdiv}

\end{document}